\definecolor{my-red}{rgb}{0.5,0.0,0.0}
\definecolor{my-blue}{rgb}{0.0,0.0,0.6}
\definecolor{my-green}{rgb}{0.0,0.5,0.0}
\definecolor{light-gray}{gray}{0.6}
	\newcommand{\one}{\mathds{1}}
\numberwithin{equation}{section}
\numberwithin{equation}{section}
\newcommand{\eq}[1]{\begin{linenomath}\postdisplaypenalty=0\begin{align*} #1 \end{align*}\end{linenomath}} 
\newcommand{\eeq}[1]{\begin{linenomath}\postdisplaypenalty=0\begin{align} \begin{split} #1 \end{split} \end{align}\end{linenomath}}
\newcommand{\stackref}[2]{
\readlist*\mylist{#1}
\stackrel{\mbox{\footnotesize\foreachitem\x\in\mylist[]{\ifnum\xcnt=1\else,\fi\eqref{\x}}}}{#2}
} 
\newcommand{\stackrefp}[2]{
\readlist*\mylist{#1}
\stackrel{\hphantom{\mbox{\footnotesize\foreachitem\x\in\mylist[]{\ifnum\xcnt=1\else,\fi\eqref{\x}}}}}{#2}
}
\newcommand{\stackrefpp}[3]{
\readlist*\mylist{#1}
\readlist*\mylistt{#2}
\stackrel{\parbox{\widthof{\footnotesize\foreachitem\x\in\mylistt[]{\ifnum\xcnt=1\else,\fi\eqref{\x}}}}{\centering\footnotesize\foreachitem\x\in\mylist[]{{\ifnum\xcnt=1\else,\fi\eqref{\x}}}}}{#3}
} 
\newcommand{\vphi}{\varphi}
\newcommand{\cC}{\mathcal{C}}
\newcommand{\cG}{\mathcal{G}}
\newcommand{\cR}{\mathcal{R}}
\DeclareMathOperator{\id}{id}
            \DeclareFontFamily{OMX}{MnSymbolE}{}
            \DeclareSymbolFont{MnLargeSymbols}{OMX}{MnSymbolE}{m}{n}
            \DeclareFontShape{OMX}{MnSymbolE}{m}{n}{
                <-6>  MnSymbolE5
               <6-7>  MnSymbolE6
               <7-8>  MnSymbolE7
               <8-9>  MnSymbolE8
               <9-10> MnSymbolE9
              <10-12> MnSymbolE10
              <12->   MnSymbolE12
            }{}
            \DeclareFontShape{OMX}{MnSymbolE}{b}{n}{
                <-6>  MnSymbolE-Bold5
               <6-7>  MnSymbolE-Bold6
               <7-8>  MnSymbolE-Bold7
               <8-9>  MnSymbolE-Bold8
               <9-10> MnSymbolE-Bold9
              <10-12> MnSymbolE-Bold10
              <12->   MnSymbolE-Bold12
            }{}
            \let\llangle\@undefined
            \let\rrangle\@undefined
            \DeclareMathDelimiter{\llangle}{\mathopen}%
                                 {MnLargeSymbols}{'164}{MnLargeSymbols}{'164}
            \DeclareMathDelimiter{\rrangle}{\mathclose}%
                                 {MnLargeSymbols}{'171}{MnLargeSymbols}{'171}
    \DeclareFontFamily{U}{matha}{\hyphenchar\font45}
    \DeclareFontShape{U}{matha}{m}{n}{ <-6> matha5 <6-7> matha6 <7-8>
    matha7 <8-9> matha8 <9-10> matha9 <10-12> matha10 <12-> matha12 }{}
    \DeclareSymbolFont{matha}{U}{matha}{m}{n}
    \DeclareFontFamily{U}{mathx}{\hyphenchar\font45}
    \DeclareFontShape{U}{mathx}{m}{n}{ <-6> mathx5 <6-7> mathx6 <7-8>
    mathx7 <8-9> mathx8 <9-10> mathx9 <10-12> mathx10 <12-> mathx12 }{}
    \DeclareSymbolFont{mathx}{U}{mathx}{m}{n}
    \DeclareMathDelimiter{\llbrack} {4}{matha}{"76}{mathx}{"30}
    \DeclareMathDelimiter{\rrbrack} {5}{matha}{"77}{mathx}{"38}
\newcommand{\rev}[1]{\rho( #1 )}
\newcommand{\semi}[1]{\overline{#1}}
\newcommand{\applies}{\leadsto}
\newcommand{\seqnum}[1]{\href{https://oeis.org/#1}{\rm \color{my-blue}\underline{#1}}}
\newcommand{\parng}[2]{#1\hspace{0.6pt}{\boldsymbol:}\hspace{0.3pt}#2}
\newcommand{\dif}{\scaleobj{0.9}{\Delta}}
\newcommand{\cat}{\oplus}
\newtheorem{theorem}{Theorem}[section]
\newtheorem{proposition}[theorem]{Proposition}
\newtheorem{lemma}[theorem]{Lemma}
\theoremstyle{definition} 
\newtheorem{definition}[theorem]{Definition}
\newtheorem{remark}[theorem]{Remark}
\title[Partial sums of $m$-step Fibonacci numbers]{A new combinatorial interpretation of partial sums of $m$-step Fibonacci numbers}
\subjclass[2020]{05A05, 
05A15, 
11B39. 
}
\keywords{generalized Fibonacci numbers, binary words, pattern avoidance}
\author[E. Bates]{Erik Bates$^*$\,\orcidlink{0000-0002-3472-036X}}
\email{ebates@ncsu.edu}
\author[B. Morrison]{Blan Morrison$^*$}
\email{bhmorris@ncsu.edu}
\address{$^*$Department of Mathematics, North Carolina State University}
\author[M. Rogers]{Mason Rogers$^{**}$}
\address{$^{**}$Department of Earth, Atmospheric and Planetary Sciences, Massachusetts Institute of Technology}
\email{masonr@mit.edu}
\author[A. Serafini]{Arianna Serafini$^\dagger$}
\address{$^\dagger$Bridgewater Associates}
\email{aserafini@alumni.stanford.edu}
\author[A. Sood]{Anav Sood$^\ddagger$\,\orcidlink{0000-0003-0833-1237}}
\address{$^\ddagger$Department of Statistics, Stanford University}
\email{anavsood@stanford.edu}
\begin{document}

\begin{abstract}
The sequence of partial sums of Fibonacci numbers, beginning with $2$, $4$, $7$, $12$, $20$, $33,\dots$, has several combinatorial interpretations (OEIS \seqnum{A000071}).  
For instance, the $n$-th term in this sequence is the number of length-$n$ binary words that avoid $110$.  
This paper proves a related but new interpretation: given a length-$3$ binary word---called the keyword---we say two length-$n$ binary words are equivalent if one can be obtained from the other by some sequence of substitutions: each substitution replaces an instance of the keyword with its negation, or vice versa.  
We prove that the number of induced equivalence classes is again the $n$-th term in the aforementioned sequence.
When the keyword has length $m+1$ (instead of $3$), the same result holds with $m$-step Fibonacci numbers.
What makes this result surprising---and distinct from the previous interpretation---is that it does not depend on the keyword, despite the fact that the sizes of the equivalence classes do.
On this final point, we prove several results on the structure of equivalence classes, and also pose a variety of open problems.
\end{abstract}

\maketitle
\thispagestyle{empty}
\setcounter{tocdepth}{1}
\tableofcontents


\section{Introduction}
\subsection{Main result}
The primary goal of this paper is to give a new combinatorial interpretation of the partial sums of $m$-step Fibonacci numbers. 
For any positive integer $m$, the $m$-step Fibonacci numbers are the sequence $(F_n^{(m)})_{n\ge0}$ defined by the initial condition
\eeq{ \label{fib_a}
F^{(m)}_0 = 1,\quad
F^{(m)}_n = 2^{n-1} \quad \text{for $n\in\{1,\dots,m-1\}$,}
}
together with the recurrence
\eeq{ \label{fib_b}
F^{(m)}_{n} = F^{(m)}_{n-1} + \cdots + F^{(m)}_{n-m} \quad \text{for $n\ge m$}.
}
The usual Fibonacci sequence (OEIS~\seqnum{A000045} with index shifted by one) is the case $m=2$.
Typically the case $m=3$ is called tribonacci (OEIS~\seqnum{A000073} with index shifted by two), the case $m=4$ is called tetranacci (OEIS~\seqnum{A000078} with index shifted by three), and so on.
The trivial scenario $m=1$ has $F^{(1)}_n = 1$ for all $n\ge0$ (OEIS~\seqnum{A000012}).

We will realize the quantity $F_0^{(m)}+F_1^{(m)}+\cdots+F_n^{(m)}$ combinatorially, from an equivalence relation $\sim_a$ on $\{0,1\}^n$ defined as follows.
For a binary word $a\in\{0,1\}^{m+1}$, let $\neg a$ denote its (bitwise) negation: every $0$ becomes $1$, and every $1$ becomes $0$. 
Upon fixing the word $a$ and some positive integer $n$, we induce an equivalence relation on $\{0,1\}^n$ by declaring that an appearance of $a$ as a subword can be replaced with $\neg a$, and vice versa.
Extending this identification transitively (i.e.~allowing multiple replacements done one at a time), we denote the resulting equivalence relation by $\sim_{a}$.
We call $a$ the \textit{keyword}.

For example, if $a=101$, then $\neg a = 010$, and the following three words of length $n=5$ are equivalent:
\begin{subequations}
\label{size4class}
\eeq{ \label{size4class_a}
10\underset{\underset{\substack{\clap{\textsf{replace $\neg a$}} \\ \clap{\textsf{with $a$}}}}{\big\uparrow}}{\underline{010}}\ \ \sim_a\ \ 
\underset{\underset{\substack{\clap{\textsf{replace $a$}} \\ \clap{\textsf{with $\neg a$}}}}{\big\uparrow}}{\underline{101}}01\ \ \sim_a\ \ 01001.
}
In fact, there is a fourth equivalent word obtained by modifying the second replacement:
\eeq{ \label{size4class_b}
1\underset{\underset{\substack{\clap{\textsf{replace $\neg a$}} \\ \clap{\textsf{with $a$}}}}{\big\uparrow}}{\underline{010}}1\ \ \sim_a\ \ 11011.
}
\end{subequations}
These four words constitute one equivalence class in $\{0,1\}^5 / \sim_a$.
%
Our main result determines the total number of equivalence classes, and surprisingly it depends only on the length of $a$:

\begin{theorem}\label{wordversion}
For any $n,m\ge 1$ and any keyword $a\in\{0,1\}^{m+1}$, the number of equivalence classes on $\{0,1\}^n$ induced by the equivalence relation $\sim_a$ is equal to $F_0^{(m)}+F_1^{(m)}+\cdots+F_{n}^{(m)}$.
\end{theorem}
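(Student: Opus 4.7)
The plan is to prove Theorem~\ref{wordversion} by strong induction on $n$. Write $C_n := |\{0,1\}^n/\sim_a|$ and $G_n := F_0^{(m)}+\cdots+F_n^{(m)}$. The base cases $0 \le n \le m$ are immediate: the keyword is too long to appear, so no replacement is possible, and $C_n = 2^n$; a direct computation from \eqref{fib_a}--\eqref{fib_b} gives $G_n = 2^n$ in the same range. For $n \ge m+1$, telescoping \eqref{fib_b} yields the recurrence
\eq{ G_n = 2 G_{n-1} - G_{n-m-1}, }
so it suffices to prove that $C_n$ satisfies the same recurrence.

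To this end, I would study the natural ``append a bit'' map $\Phi\colon (\{0,1\}^{n-1}/\sim_a) \times \{0,1\} \to \{0,1\}^n/\sim_a$ defined by $([w'],b) \mapsto [w'b]$. This is well-defined (any replacement strictly within the first $n-1$ positions lifts verbatim) and surjective. Let $B$ denote the number of classes $[w]_n \in \{0,1\}^n/\sim_a$ containing elements with both possible last bits; counting preimages of $\Phi$ yields $2 C_{n-1} = C_n + B$ provided each fiber of $\Phi$ has size at most two. Both this fiber bound and the identification of $B$ rest on two \emph{stability} claims that I view as the crux of the argument: \textbf{(S1)} if $w \sim_a w'$ in $\{0,1\}^n$ and $w_n = w'_n$, then the length-$(n-1)$ prefixes of $w$ and $w'$ are $\sim_a$-equivalent in $\{0,1\}^{n-1}$; and \textbf{(S2)} if $x_1 a \sim_a x_2 a$ in $\{0,1\}^n$, then $x_1 \sim_a x_2$ in $\{0,1\}^{n-m-1}$.

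Granting (S1) forces the fibers of $\Phi$ to have size at most two. To identify $B = C_{n-m-1}$, I would construct the bijection $\{0,1\}^{n-m-1}/\sim_a \leftrightarrow \{\text{classes in }\{0,1\}^n\text{ with both last bits}\}$ via $[x]_{n-m-1} \mapsto [xa]_n$. This map is well-defined and injective by (S2); surjectivity follows because in any class with both last bits, a chain of replacements connecting two representatives with differing last bits must use position $n-m$ at some step (the only position affecting bit $n$), whose input has the form $xa$ or $x(\neg a)$ for some $x \in \{0,1\}^{n-m-1}$. Hence $B = C_{n-m-1}$, giving $C_n = 2 C_{n-1} - C_{n-m-1}$ and closing the induction.

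I expect (S1) and (S2) to be the main technical obstacle, and they are where the surprising keyword-independence of the count (despite the keyword-dependence of class sizes) ultimately comes from. Their content is that any chain of replacements realizing an equivalence can be rearranged so as to avoid the ``straddling'' replacements at positions $n-2m,\ldots,n-m-1$ that temporarily alter boundary bits before the chain restores them. I would aim to prove (S1) and (S2) by a commutation-and-cancellation argument: replacements at positions differing by more than $m$ commute freely, so the strategy is to push straddling replacements together and cancel them in pairs (each replacement being an involution), likely via a diamond-type local confluence lemma combined with a well-founded measure on chains.
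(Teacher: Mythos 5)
Your outline reproduces the paper's proof almost step for step: the same reduction of $F_0^{(m)}+\cdots+F_n^{(m)}$ to the recurrence $S_n = 2S_{n-1}-S_{n-m-1}$ (the paper's Lemma~\ref{nd6g3}), the same append-a-bit surjection from $(\{0,1\}^{n-1}/\sim_a)\times\{0,1\}$ onto $\cC_n$, the same identification of the overcount with $\cC_{n-m-1}$ via classes containing a word ending in the keyword, and the same surjectivity argument (a chain that flips the last letter must at some point apply $\vphi_{n-m}$ to a word ending in $a$ or $\neg a$). Your stability claims (S1) and (S2) are precisely the two instances of the $\impliedby$ direction of the paper's Lemma~\ref{tack_on} (appending a single letter, and appending the keyword $a$) that the paper uses, and the counting bookkeeping ($2C_{n-1}=C_n+B$ with fibers of size at most two) is equivalent to the paper's ``no three classes in the list coincide'' observation. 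So the decomposition is not a gap; it is the paper's.

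The genuine gap is that (S1)/(S2) -- which you correctly identify as the crux -- are left as a strategy rather than a proof, and the strategy as described is unlikely to work in the form stated. The rewriting system here is \emph{not} confluent: a single class can contain two distinct keyword-avoiding words (see \eqref{size4class} for $a=101$), overlapping simple maps genuinely fail to commute (Proposition~\ref{commute_lem}), and one substitution can enable or disable another (see \eqref{a7v2}), so a diamond-type local confluence lemma is not available and ``pushing straddling replacements together to cancel in pairs'' cannot be done by commutation alone. What the paper actually does (Lemma~\ref{tgv9kd} feeding into the proof of Lemma~\ref{tack_on}) is a parity argument on a \emph{minimal-length} chain: if some map with index in the appended prefix/suffix were used, the least such index $i_{(1)}$ is touched only by $\vphi_{i_{(1)}}$ itself, hence must occur an even number of times; between the rightmost such pair, the least index $i_{(2)}$ must again occur an even number of times (because otherwise the letter pattern at $i_{(1)}$ would be destroyed and the outer $\vphi_{i_{(1)}}$ could not act, contradicting minimality); iterating exhausts the indices and contradicts minimality of the chain. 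Without this (or an equivalent) argument, (S1) and (S2) are unproven and the induction does not close.
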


The proof of Theorem~\ref{wordversion} is given in Section~\ref{main_proof}.
The sizes and structure of equivalence classes is studied in Section~\ref{sec_size_structure}, and open problems are provided in Section~\ref{sec_open}.

\begin{remark}[Other relevant sequences for Theorem~\ref{wordversion}] \label{test_rmk}
For $m=2$, we have $F_0^{(2)}+F_1^{(2)}+\cdots+F_n^{(2)} = F_{n+2}^{(2)}-1$ (OEIS~\seqnum{A000071}).
So in this case, Theorem~\ref{wordversion} can be regarded as an interpretation of the Fibonacci sequence directly.
But this simplification does not exist for $m\ge3$; for example, see\ OEIS~\seqnum{A008937} for $m=3$.
For general $m$, the sequence $(F_0^{(m)}+F_1^{(m)}+\cdots+F_n^{(m)})_{n\ge0}$ is the $m$-th column of OEIS~\seqnum{A172119}.
\end{remark}

\subsection{Discussion of main result} \label{sec_discussion}
We first mention two easy cases of Theorem~\ref{wordversion}:
\begin{itemize}
\item If $n < m+1$, then the keyword $a\in\{0,1\}^{m+1}$ is too long to appear as a subword of any element of $\{0,1\}^n$, so the equivalence relation $\sim_a$ makes no nontrivial identifications.
That is, every equivalence class has exactly one element. 
This is consistent with the fact that $F_0^{(m)} + F_1^{(m)} + \cdots + F_n^{(m)} = 2^n$ for any $n\le m$.
\item If $n = m+1$, then the only element of $\{0,1\}^n$ containing $a$ as a subword is $a$ itself, so the only nontrivial identification is $a\sim_a \neg a$.
That is, there is one equivalence class with two elements, and all other equivalance classes are singletons.
This is consistent with the fact that $F_0^{(m)} + F_1^{(m)} + \cdots + F_{m+1}^{(m)} = 2^{m+1}-1$.
\end{itemize}

As soon as $n \ge m+2$, the sizes of equivalence classes vary with $a$.
For example, when $n=4$ and $m=2$, Theorem~\ref{wordversion} says there are $1 + 1 + 2 + 3 + 5 = 12$ equivalence classes, and Table~\ref{differentsizes} (top left) shows two different ways these equivalence classes can arrange themselves.
This variability makes Theorem~\ref{wordversion} all the more surprising, and invites questions on the sizes and structure of equivalence classes.
We offer a few answers in Section~\ref{sec_size_structure}, and list several open problems in Section~\ref{sec_open}.

\begin{table}[ht]
\caption{Number of equivalence classes of size $s$ induced on $\{0,1\}^n$ by two different choices of the keyword $a$.}
\setlength{\columnsep}{-2.5cm}
\begin{multicols}{2}
    \begin{tabular}{c||cc}
        $n=4$ & $a=110$ & $a=101$ \\\hline\hline
        $s=1$ & 8 & 10 \\
        $s=2$ & 4 & 0 \\
        $s=3$ & 0 & 2 \\\hline
        total & 12 & 12 \\
    \end{tabular}
 \\[0.3in]
    \begin{tabular}{c||cc}
        $n=5$ & $a=110$ & $a=101$ \\\hline\hline
        $s=1$ & 10 & 16 \\
        $s=2$ & 8 & 0 \\
        $s=3$ & 2 & 0 \\
        $s=4$ & 0 & 4 \\ \hline
        total & 20 & 20 \\
    \end{tabular}
    \\[0in]
\columnbreak
\vspace*{0.1in}
    \begin{tabular}{c||cc}
        $n=6$ & $a=110$ & $a=101$ \\\hline\hline
        $s=1$ & 12 & 26 \\
        $s=2$ & 12 & 0 \\
        $s=3$ & 8 & 0 \\
        $s=4$ & 1 & 0 \\
        $s=5$ & 0 & 6 \\
        $s=6$ & 0 & 0 \\
        $s=7$ & 0 & 0 \\
        $s=8$ & 0 & 1 \\ \hline
        total & 33 & 33 \\
    \end{tabular}
    \label{differentsizes}
\end{multicols}
\end{table}


When $a = 110$, the conclusion of Theorem~\ref{wordversion} can be inferred from Zeckendorf's theorem, which states that every nonnegative integer can be uniquely written as a sum of nonconsecutive Fibonacci numbers \cite{lekkerkerker52,brown64,zeckendorf72}.
To see the relevance of this fact, map the binary word $u = (u_1,\dots,u_n)\in\{0,1\}^n$ to the integer $N_u = \sum_{i=1}^n u_i F_i^{(2)}$.
This mapping is not injective, thanks to the Fibonacci recursion $F_{i}^{(2)}+F_{i+1}^{(2)}=F_{i+2}^{(2)}$ which is encoded by the relation $110\sim_a 001$.
In this perspective, the equivalence class of $u$ under $\sim_a$ consists of different representations of $N_u$ as a sum of distinct elements of $\{F_1^{(2)},\ldots,F_n^{(2)}\}$.
Therefore, the number of equivalence classes is at least the number of integers between $0 = N_{0\cdots0}$ and $F_1^{(2)}+\cdots+F_n^{(2)} = N_{1\cdots1}$.
For the reverse inequality, one needs to check that no two equivalence classes correspond to the same integer.
Indeed, every equivalence class has a representative avoiding the subword $110$, and Zeckendorff's theorem tells us---after a little extra work to account for Remark~\ref{technical_annoyance} stated below---that every integer has only one such representation.

\begin{remark}[Technical point concerning the Fibonacci numeration system] \label{technical_annoyance}
Unless $N$ is one less than a Fibonacci number, its Zeckendorf representation is one bit longer than its shortest representation.
In other words, the `extra work' mentioned above is to compare words that avoid $110$ but end in $11$.
Moreover, to ensure that the equivalence class in $\{0,1\}^n$ corresponding to $N$ includes \textit{all} representations of $N$, one should insist that $N$ can be written in $n-1$ bits, i.e.~$N\le F_1^{(2)}+\cdots+F_{n-1}^{(2)} = F_{n+1}^{(2)}-2$.
The case $N=F_{n+1}^{(2)}-1$ is also allowed (despite needing all $n$ bits) because its representation is unique: see \cite[Thm.~5(a)]{klarner66} or \cite[Thm.~2]{carlitz68}.
\end{remark}

Unfortunately, the argument discussed above for Theorem~\ref{wordversion} does not extend to other keywords such as $a = 101$.
After all, the relation $101\sim_a010$ has no obvious connection to the Fibonacci recursion nor to a nice numeration system.
Moreover, Zeckendorf's theorem does not seem to play a role in the structure of equivalence classes: in the previous discussion every equivalence class had a \textit{unique} representative avoiding the keyword, but this no longer holds when the keyword is $a = 101$.
For instance, the equivalence class from \eqref{size4class} has two representatives that avoid $101$.
This begins to explain the different equivalence class structures seen in Table~\ref{differentsizes}, and also demonstrates the challenge of proving Theorem~\ref{wordversion} for an arbitrary keyword.

Although our main objective is Theorem~\ref{wordversion}, the reader may wonder at this point when two keywords yield the same sizes of equivalence classes.
For instance, it is immediate that $\sim_a$ and $\sim_{\neg a}$ are the same equivalence relation, so $a$ can be replaced with $\neg a$ without any effect.
In Proposition~\ref{thm_iso}, we identify two other keyword modifications that preserve equivalence class structure: reversal and ``seminegation'' (see Definition~\ref{def_seminegation}).
Together these operations can be used to transform any $a\in\{0,1\}^3$ to either $110$ or $101$, so these two keywords account for all possible equivalence class structures induced by length-$3$ keywords.
Similarly, any length-$4$ keyword can be transformed to either $0000$, $0001$, or $0011$, and these three keywords all have different equivalence class structures.
But for longer keywords, the story is more complicated: see open problem \ref{open1}.





\subsection{Related literature} \label{related_lit}

For the ``Fibonacci keyword'' $a = 110$, we have mentioned that each equivalence class corresponds to a nonnegative integer $N$, and the elements of the equivalence class are different representations of $N$ as a sum of distinct Fibonacci numbers.
A natural quantity to investigate is the number of such representations, denoted by $R(N)$.
By Remark~\ref{technical_annoyance}, if $N\le F_{n+1}^{(2)}-1$, then $R(N)$ is equal to the size of the equivalence class in $\{0,1\}^n/\sim_a$ corresponding to $N$.

The study of the sequence $(R(N))_{N\ge0}$ (OEIS~\seqnum{A000119}) dates back to at least \cite{hoggatt_basin63}.
Its value is known at various special values of $N$, based on a variety of recursions \cite{klarner66,klarner68,carlitz68,bicknelljohnson_fielder99}.
For general $N$, it is possible to express $R(N)$ as a product of $2\times2$ matrices determined by the Zeckendorf represenation of $N$ \cite{berstel01}.
This formula was generalized to $m$-step Fibonacci numbers in \cite{kocabova_masakova_pelantova07}.
An alternative formula was found in \cite{edson_zamboni06} using binomial coefficients modulo $2$.


To align with the Fibonacci numeration system, it is natural to partition the sequence $(R(N))_{N\geq1}$ into blocks of the form $(R(N))_{F_n^{(2)}\leq N<F^{(2)}_{n+1}}$.
It was shown in \cite{edson_zamboni04} how to recursively compute each block from the previous one, which enabled the observation that the set $\{N:\, \text{$R(N)=s$ and $F_{n}^{(2)}\leq N<F_{n+1}^{(2)}$}\}$ has the same cardinality for all $n\ge2s$.
For example, this fact manifests in Table~\ref{differentsizes} as follows: the number of equivalence classes of size $s=1$ grows by $2$ each time, and the number of equivalence classes of size $s=2$ grows by $4$ each time.
For $m\ge 3$, these increments are no longer constant and not fully understood.
In the special case of $s=1$, it is known that the increments are $(m-1)$-step Fibonacci numbers \cite[Prop.~3.1]{kocabova_masakova_pelantova07}.

For other values of the keyword $a$, the induced equivalence classes do not seem to have been studied previously in the literature.
On the other hand, our pattern substitutions ($a\leftrightarrow\neg a$) can be thought of as a modification of pattern avoidance, which is a well-studied problem, e.g.\ \cite{noonan_zeilberger99,baccherini_merlini_sprugnoli07,merlini_sprugnoli11,bilotta_merlini_pergola12,bilotta_grazzini_pergola13,carrigan_hollars_rowland24}; see also \cite[Chap.~8]{sedgewick_flajolet13}.
In \cite{guibas_odlyzko81}, Guibas and Odlyzko computed the generating function for the number of length-$n$ words avoiding a given set of subwords.
For instance, avoiding both the subwords $a$ and $\neg a$ corresponds to size-$1$ equivalence classes in our setting; see Proposition~\ref{guibas_rem} for a precise consequence of this connection to \cite{guibas_odlyzko81}.


\section{Proof of main result} \label{main_proof}

\subsection{Notation and definitions} \label{def_not} 

An element $u=(u_1,\dots,u_n)$ of $\{0,1\}^n$ is called a \textit{(binary) word of length $n$}; the coordinate $u_i$ is said to be $i$-th \textit{letter} of $u$.
The subword of $u$ starting at letter $i$ and ending at letter $j$ is denoted by $u_{\parng{i}{j}} = (u_i,\dots,u_j)$.
An interval of integers is written as $\llbrack i,j\rrbrack = \{i,i+1,\dots,j\}$.
The concatenation of $u\in\{0,1\}^n$ and $w\in\{0,1\}^{n'}$ is denoted by
\eq{
u\cat w = (u_1, \dots, u_n, w_1, \dots, w_{n'})\in\{0, 1\}^{n+n'}.
}
In what follows, 
$a$ is a fixed element of $\{0,1\}^{m+1}$ referred to as the \textit{keyword}.

Our goal in this subsection is to formalize the equivalence relation appearing in Theorem~\ref{wordversion}.
The first step is to make the following definition.

\begin{definition}[Substitutions involving the keyword] \label{applicability}
Given a word $u\in\{0,1\}^n$ and a keyword $a\in\{0,1\}^{m+1}$,
we say that $a$ is \textit{applicable} to $u$ at letter $i$ (and write $a\applies_i u$) if
either $a$ or its negation appears as a subword of $u$ starting at letter $i$:
\eq{
    u_{\parng{i}{i+m}} \in \{a,\neg a\}.
}
(Notice that this requires $i+m\le n$.)
If $a$ is not applicable to $u$ at letter $i$, then we write $a\not\applies_i u$.

Let $\vphi_i^{(a)}\colon \{0,1\}^n\to \{0,1\}^n$ be the map that replaces $a$ with $\neg a$ or vice versa if one of them appears as a subword starting at letter $i$, and does nothing otherwise.
More precisely:
\begin{subequations} \label{def_simple_map}
\begin{align}
&\text{If $a\applies_i u$, then $\vphi_{i}^{(a)}(u)=u'$, where $u'_j=\begin{cases} 
      \neg u_{j} & j\in\llbrack i,i + m\rrbrack \\
      u_{j} & \text{otherwise.}
   \end{cases}$} \\
&\text{If $a\not\applies_i u$, then $\vphi_i^{(a)}(u)=u$.}
\end{align}
\end{subequations}
We call $\vphi_i^{(a)}$ a \textit{simple map of index $i$}. 
We refer to the set of indices $\llbrack i,i+m\rrbrack$ as the \textit{line of action} of $\vphi_i^{(a)}$.
\end{definition}

Simple maps do not commute in general, since one substitution could prevent another.
For example, if $a = 101$, then
\eq{
&(\vphi^{(a)}_2\circ\vphi^{(a)}_1)(\underline{010}10)
= \vphi^{(a)}_2(10110) = 10110, \\
\text{while} \quad
&(\vphi^{(a)}_1\circ\vphi^{(a)}_2)(0\underline{101}0)
= \vphi^{(a)}_1(00100) = 00100.
}
Of course, it can also be the case that a certain substitution is only possible after a substitution of a different index takes place.
For example (again with $a=101$):
\eeq{ \label{a7v2}
&(\vphi^{(a)}_3\circ\vphi^{(a)}_1)(\underline{101}10)
= \vphi^{(a)}_3(01\underline{010}),
= 01101 \\
\text{while} \quad
&(\vphi^{(a)}_1\circ\vphi^{(a)}_3)(10110)
= \vphi^{(a)}_1(\underline{101}10)
= 01010.
}
What is clear, however, is that simple maps with disjoint lines of action \textit{do} commute, since any substitution made by one does not interfere with a possible substitution by the other.
We record this fact for later use:
\eeq{ \label{simple_commute}
\llbrack i,i+m\rrbrack \cap \llbrack j,j+m\rrbrack = \varnothing
\quad \implies \quad
\vphi_i^{(a)} \circ \vphi_j^{(a)} 
= \vphi_j^{(a)} \circ \vphi_i^{(a)}.
}
Proposition~\ref{commute_lem} gives necessary and sufficient conditions for commutativity, but for now it is enough to know \eqref{simple_commute}.

Note that all simple maps are involutions; in particular, they are invertible.
This explains why $\sim_a$ defined below is an equivalence relation.

\begin{definition}[Equivalence relation induced by the keyword]
Given a keyword $a\in\{0,1\}^{m+1}$, we write $u\sim_a v$ 
if there exists a sequence of indices $(i_1,\dots,i_r)$ such that $(\vphi_{i_r}^{(a)} \circ \dots \circ \vphi_{i_1}^{(a)})(u) = v$.
We allow the empty composition (i.e.~the identity map when $r=0$) so that $u\sim_a u$.
\end{definition}

Transitivity of $\sim_a$ is seen by composing compositions, and symmetry follows from the fact that each simple map $\vphi_i^{(a)}$ is its own inverse.
Therefore, $\sim_a$ is indeed an equivalence relation on $\{0,1\}^n$.
We denote the equivalence class of $u\in\{0,1\}^n$ by
\eq{
[u]_a = \{v\in\{0,1\}^n:\, u\sim_a v\}.
}

In Section~\ref{sec_size_structure} we will consider the effect of changing the keyword.
But for now we keep the value of $a\in\{0,1\}^{m+1}$ fixed, so for the remainder of Section~\ref{main_proof} we dispense with notational decoration and just write $\vphi_i$ for $\vphi_i^{(a)}$, $\sim$ for $\sim_a$, and $[u]$ for $[u]_a$.

\subsection{Preliminary observation}




A key tool for proving Theorem~\ref{wordversion} is the following lemma about concatenations.

\begin{lemma}
\label{tack_on}
For any $u,v\in\{0,1\}^n$ and $w\in\{0,1\}^{n'}$, the following equivalences hold:
\begin{align}
u \sim v &\iff u\cat w \sim v\cat w, \label{tack_onb} \\
u \sim v &\iff w\cat u \sim w\cat v. \label{tack_onf}
\end{align}
\end{lemma}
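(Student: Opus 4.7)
The forward directions of both equivalences follow by direct verification. Suppose $u\sim v$ is witnessed by a sequence of simple maps $\vphi_{i_1}^{(a)},\dots,\vphi_{i_r}^{(a)}$ on $\{0,1\}^n$, so each $i_k\in\llbrack 1,n-m\rrbrack$. Applying the same indexed simple maps to $u\cat w\in\{0,1\}^{n+n'}$ yields $v\cat w$: each line of action $\llbrack i_k,i_k+m\rrbrack$ is contained in $\llbrack 1,n\rrbrack$, so the last $n'$ letters remain equal to $w$ and the first $n$ letters undergo exactly the same transformations as in the shorter computation. This proves the forward direction of \eqref{tack_onb}; the forward direction of \eqref{tack_onf} follows symmetrically using the index-shifted sequence $\vphi_{i_1+n'}^{(a)},\dots,\vphi_{i_r+n'}^{(a)}$, whose lines of action lie in $\llbrack n'+1,n+n'\rrbrack$.

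For the reverse direction of \eqref{tack_onb}, the plan is to show that any sequence of simple maps taking $u\cat w$ to $v\cat w$ can be replaced by one whose lines of action all lie within $\llbrack 1,n\rrbrack$, which then restricts to a sequence witnessing $u\sim v$. Given such a sequence $\vphi_{j_1}^{(a)},\dots,\vphi_{j_r}^{(a)}$, I would classify each index as \emph{left} (if $j_k+m\le n$), \emph{boundary} (if $j_k\in\llbrack n-m+1,n\rrbrack$), or \emph{right} (if $j_k>n$), and induct on the number of non-left indices. The base case (no non-left maps) is immediate: the first $n$ coordinates evolve by exactly the same rule as on the shorter word, since applicability of each left map depends only on positions in $\llbrack 1,n\rrbrack$, so the restricted sequence is itself a direct witness of $u\sim v$ on $\{0,1\}^n$.

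The main obstacle will be the inductive step. Since the trailing copy of $w$ is restored at the end, the cumulative effect of all non-left maps on the last $n'$ coordinates must be trivial, which strongly suggests that such maps can be paired off. To carry this out, I would use the commutation relation \eqref{simple_commute} and the involutivity of simple maps to rearrange the sequence so that each non-left map appears adjacent to a partner cancelling its effect on the second half; each paired transformation would then be replaced by a composition of left maps producing the same net effect on the first $n$ coordinates, strictly reducing the non-left count. The delicate point is that boundary maps' applicability depends on the state of both halves simultaneously, so any shuffling must respect applicability at every intermediate step and may require adjusting the intervening left maps in a way consistent with the partner's cancellation. The reverse direction of \eqref{tack_onf} follows by the same argument with indices shifted by $n'$.
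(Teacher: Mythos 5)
Your forward directions are fine and agree with the paper's. The reverse direction, however, contains a genuine gap: the inductive step, which you yourself identify as ``the main obstacle,'' is never carried out, and each of the three mechanisms you propose for it is problematic. First, the observation that the net effect of the non-left maps on the last $n'$ coordinates is trivial only gives you a parity statement \emph{per coordinate}; it does not pair off the \emph{maps}, since maps of different indices can negate the same coordinate, so ``such maps can be paired off'' does not follow. Second, the rearrangement step cannot lean on \eqref{simple_commute}, because that relation only covers maps with \emph{disjoint} lines of action, whereas the maps you need to shuffle past one another (two boundary maps, or a boundary map and a nearby left map) are exactly those with overlapping lines of action; as the paper's examples around \eqref{a7v2} show, such maps genuinely fail to commute, and one substitution can create or destroy the applicability of another. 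Third, the claim that an adjacent cancelling pair of non-left maps ``can be replaced by a composition of left maps producing the same net effect on the first $n$ coordinates'' is essentially the statement being proved, restated locally; nothing in your sketch justifies it.

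The paper resolves the difficulty by a different and sharper mechanism: it proves \eqref{tack_onf} (reducing \eqref{tack_onb} to it by reversal), takes a witnessing list of \emph{minimal} length $r$, and shows that such a list never touches the prefix at all --- there is no pairing-off or replacement. The engine is a descent: if some index were $\le n'$, the smallest such index $i_{(1)}$ must occur an even number of times (parity of negations at position $i_{(1)}$, which only $\vphi_{i_{(1)}}$ can affect, by minimality of the index); the sublist $\Phi_1$ between the rightmost pair of $\vphi_{i_{(1)}}$'s cannot commute with $\vphi_{i_{(1)}}$ (else the pair cancels, contradicting minimality of $r$), and Lemma~\ref{tgv9kd} then forces $\Phi_1$ to contain at least two copies of \emph{its} minimal index; iterating exhausts the available indices and yields a contradiction. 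If you want to complete your argument along your own lines, you would need a substitute for Lemma~\ref{tgv9kd} that controls how a boundary map's applicability evolves through the intervening maps --- which is precisely the content you have deferred.
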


The proof of Lemma~\ref{tack_on} requires one additional definition and one additional lemma.

\begin{definition}[Action by lists of simple maps]
A list of simple maps $\Phi = (\vphi_{i_r},\ldots,\vphi_{i_1})$ can be treated as a function $\{0,1\}^n\to\{0,1\}^n$ in the natural way:
$\Phi(u) = (\vphi_{i_r}\circ\cdots\circ\vphi_{i_1})(u)$.
We say $\Phi$ \textit{acts completely} on $u$ if each simple map performs an actual substitution:
\eeq{ \label{acts_completely}
(\vphi_{i_q} \circ\dots\circ \vphi_{i_1})(u)\neq (\vphi_{i_{q-1}} \circ\cdots\circ\vphi_{i_1})(u) \quad \text{for each $q\in\{1,\dots,r\}$}.
}
\end{definition}

\begin{lemma} \label{tgv9kd}
Fix $i\in\llbrack1,n\rrbrack$.
Consider a list of simple maps $\Upsilon = (\vphi_{j_r},\dots,\vphi_{j_1})$ that satisfies
all three of the following conditions:
\begin{enumerate}[label=\textup{(\roman*)}]

\item \label{tgv9kd_0} $j_1,\dots,j_r>i$

\item \label{tgv9kd_1} $\vphi_i \circ \Upsilon \neq \Upsilon\circ\vphi_i$ as maps on $\{0,1\}^n$.

\item \label{tgv9kd_2} There is some $x\in\{0,1\}^n$ such that $(\vphi_i,\Upsilon,\vphi_i)$ acts completely on $x$. 
\end{enumerate}
Then $\Upsilon$ contains an even number of instances of $\vphi_j$, where $j = \min\{j_1,\dots,j_r\}$.
\end{lemma}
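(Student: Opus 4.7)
The plan is to track the letter at the single position $j^* := \min\{j_1, \dots, j_r\}$ throughout the composition $\vphi_i \circ \Upsilon \circ \vphi_i$, and use a parity argument to show the number $k$ of occurrences of $\vphi_{j^*}$ in $\Upsilon$ is even.

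First, I would reduce to the case $j^* \in \llbrack i+1, i+m\rrbrack$. If instead $j^* > i+m$, then every $j_q \ge j^*$ yields a line of action $\llbrack j_q, j_q+m\rrbrack$ disjoint from $\llbrack i, i+m\rrbrack$, so by \eqref{simple_commute} each $\vphi_{j_q}$ commutes with $\vphi_i$, and hence $\vphi_i$ commutes with $\Upsilon$, contradicting (ii). Combined with (i), this localizes $j^*$ strictly inside the line of action of $\vphi_i$.

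Next, I would isolate what $\Upsilon$ does to the subword at positions $\llbrack i, i+m\rrbrack$. The letters at positions $\llbrack i, j^*-1\rrbrack$ are untouched by $\Upsilon$, because every $\vphi_{j_q}$ has line of action starting at $j_q \ge j^*$. By complete action, both invocations of $\vphi_i$ perform genuine substitutions, so if we set $b := x_{\parng{i}{i+m}} \in \{a,\neg a\}$, then $\vphi_i(x)_{\parng{i}{i+m}} = \neg b$ and also $\Upsilon(\vphi_i(x))_{\parng{i}{i+m}} \in \{a,\neg a\}$. The first $j^*-i$ letters of $\Upsilon(\vphi_i(x))_{\parng{i}{i+m}}$ coincide with those of $\neg b$ by the previous observation; and since $a$ and $\neg a$ differ at every coordinate, the opening letter alone determines which one a length-$(m+1)$ candidate equals. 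This forces $\Upsilon(\vphi_i(x))_{\parng{i}{i+m}} = \neg b$, so $\Upsilon$ actually fixes every letter in $\llbrack i, i+m\rrbrack$; in particular, it fixes position $j^*$.

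Finally, I would count flips at position $j^*$. Among the maps in $\Upsilon$, only those with $j_q = j^*$ touch position $j^*$, since $j_q \ge j^*$ forces $j^* \in \llbrack j_q, j_q+m\rrbrack$ iff $j_q = j^*$. By complete action, each such $\vphi_{j^*}$ actually flips position $j^*$, so $\Upsilon$ toggles that letter exactly $k$ times. Since the previous paragraph showed the net change is zero, $k$ must be even.

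The main obstacle I anticipate is recognizing that the entire subword at $\llbrack i, i+m\rrbrack$ is pinned down by its first $j^*-i$ letters (via the $\{a,\neg a\}$ dichotomy); without that observation, one would try to control the whole overlap region $\llbrack j^*, i+m\rrbrack$ letter by letter, which is delicate because interior positions can be flipped many times by different maps in $\Upsilon$. The rest of the argument is careful bookkeeping about which maps can affect which positions, relying only on the simple commutativity fact \eqref{simple_commute} and the definition of complete action.
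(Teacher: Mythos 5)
Your proof is correct and follows essentially the same route as the paper's: both reduce to the case where $\vphi_i$ and $\vphi_{j}$ have intersecting lines of action using hypothesis \ref{tgv9kd_1}, then exploit that position $i$ is untouched by $\Upsilon$ together with the fact that the final $\vphi_i$ must act (so the subword on $\llbrack i,i+m\rrbrack$ lies in $\{a,\neg a\}$, and $a$, $\neg a$ differ in every coordinate) to control the parity of flips at position $j$. The only difference is presentational: the paper argues by contradiction from an assumed odd count, whereas you argue directly by first pinning down the entire subword $\Upsilon(\vphi_i(x))_{\parng{i}{i+m}} = \neg b$ from its first letter.
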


\begin{proof}
We first claim that $\vphi_{i}$ and $\vphi_{j}$ have intersecting lines of action, i.e.~$\llbrack i,i+m\rrbrack\cap\llbrack j,j+m\rrbrack\neq\varnothing$.
Indeed, if this were not the case, then the line of action $\vphi_i$ would be disjoint from that of any simple map in $\Upsilon$, by \ref{tgv9kd_0} and minimality of $j$.
Hence $\vphi_{i}$ would commute with $\Upsilon$ by \eqref{simple_commute}, contradicting \ref{tgv9kd_1}.
So we assume henceforth that $\vphi_i$ and $\vphi_j$ have intersecting lines of action.

Since $\vphi_i(x)\neq x$ by \ref{tgv9kd_2}, we have
\eeq{ \label{bgqc7}
x_{\parng{i}{i+m}} \in\{a,\neg a\}.
}
The application of $\Upsilon\circ \vphi_i$ to $x$ negates the $i$-th letter only once (namely, in the application of $\vphi_i$), since $j_1,\dots,j_r > i$.
Since we have assumed $\vphi_{i}$ and $\vphi_{j}$ have intersecting lines of action, the $j$-th letter of $x$ is also negated by $\vphi_{i}$.
The $j$-th letter is further negated by $\vphi_{j_q}$ in $(\vphi_i\circ\vphi_{j_r}\circ\cdots\circ\vphi_{j_1}\circ\vphi_{i})(x)$
whenever $j_q = j$, but not when $j_q > j$.
So if the lemma were false (meaning there is an odd number of $q$'s such that $j_q = j$), then the $j$-th letter of $x$ would be negated an even number of times by $\Upsilon \circ \vphi_{i}$, while the $i$-th letter is negated an odd number of times (in fact, only once) by $\Upsilon \circ \vphi_{i}$. 
This would imply the scenario of \eqref{bgqc7} is no longer true after the application of $\Upsilon \circ \vphi_i$:
\eq{
((\Upsilon\circ \vphi_i)(x))_{\parng{i}{i+m}} \notin \{a,\neg a\}.
} 
This in turn means $\vphi_i$ has no effect on $(\Upsilon\circ \vphi_i)(x)$, which contradicts \ref{tgv9kd_2}.
\end{proof}

\begin{proof}[Proof of Lemma~\ref{tack_on}]
We will just show \eqref{tack_onf}, since it is notationally easier. 
To see that \eqref{tack_onb} follows, simply reverse the letters in all the words (including the keyword), apply \eqref{tack_onf}, and then reverse back.

The $\implies$ direction of \eqref{tack_onf} is trivial by just shifting indices and never modifying the prefix $w$ of length $n'$:
\eq{
\text{if}\quad (\vphi_{i_r}\circ\cdots\circ\vphi_{i_1})(u) = v, \quad \text{then} \quad
(\vphi_{i_r+n'}\circ\cdots\circ\vphi_{i_1+n'})(w\cat u) = w\cat v.
}
For the $\impliedby$ direction, we suppose $w\cat u \sim w\cat v$. 
That is, there exists a list of simple maps $\Phi = (\vphi_{i_r},\dots,\vphi_{i_1})$ such that 
\eeq{ \label{wuwv}
(\vphi_{i_r}\circ\dots\circ\vphi_{i_1})(w\cat u)=w\cat v.
}
We choose $r$ minimally 
in the sense that no shorter list satisfies \eqref{wuwv}. 
In particular, $\Phi$ acts completely on $w\cat u$:
\eeq{ \label{djb6c2}
w\cat u \neq \vphi_{i_1}(w\cat u) \neq (\vphi_{i_2}\circ\vphi_{i_1})(w\cat u)\neq\cdots\neq(\vphi_{i_r}\circ\cdots\circ\vphi_{i_1})(w\cat u).
}
If $i_q > n'$ for all $q\in\llbrack1,r\rrbrack$, then none of letters in the prefix $w\in\{0,1\}^{n'}$ are modified in 
\eqref{wuwv}.
In this case, a shift of indices results in
\eq{
(\vphi_{i_r - n'} \circ \dots \circ \vphi_{i_1 - n'})(u) = v,
}
so $u\sim v$ as desired.
In fact, the remainder of the proof will show that this is the only way \eqref{wuwv} can occur (when $r$ is minimal).
So suppose toward a contradiction that $i_q \le n'$ for some $q$. 

Let $i_{(1)}=\min\{i_1,\dots, i_r\}\le n'$. 
By minimality of $i_{(1)}$, the application of $\vphi_{i_q}$ in \eqref{wuwv} negates the $i_{(1)}$-th letter if and only if $i_q = i_{(1)}$.
In total, the $i_{(1)}$-th letter is negated an even number of times since $(w\cat u)_{i_{(1)}} = w_{i_{(1)}} = (w\cat v)_{i_{(1)}}$.
So there must be an even number of instances of $\vphi_{i_{(1)}}$ in \eqref{wuwv}:
\eq{
\Phi = \big(\dots, \underbrace{ \hspace{2.5pt} \vphi_{i_{(1)}}, \dots, \vphi_{i_{(1)}}, \dots, \vphi_{i_{(1)}}, \dots \vphi_{i_{(1)}} }_{ \text{even number of} \hspace{3pt} \vphi_{i_{(1)}}},\Psi_1\big).
}
Here $\Psi_1$ is a (possibly empty) list not containing any instance of $\vphi_{i_{(1)}}$. Consider the rightmost pair of $\vphi_{i_{(1)}}$ and denote the sequence between them as $\Phi_1$ like so: 
\eq{
\Phi = \big(\dots, \vphi_{i_{(1)}}, \Phi_1, \vphi_{i_{(1)}}, \Psi_1\big).
}

Now we check that the three hypotheses of Lemma~\ref{tgv9kd} are satisfied with $i = i_{(1)}$ and $\Upsilon = \Phi_1$.
By construction, each simple map in $\Phi_1$ has index strictly greater than $i_{(1)}$, so assumption~\ref{tgv9kd_0} is valid.
Second, $\Phi_1$ does not commute with $\vphi_{i_{(1)}}$ since $\vphi_{i_{(1)}}\circ \vphi_{i_{(1)}}=\id$ and $r$ is minimal for \eqref{wuwv}, so assumption~\ref{tgv9kd_1} is valid
(in particular, $\Phi_1$ is not empty).
Finally, assumption~\ref{tgv9kd_2} is valid with $x = \Psi_1(w\cat u)$ because of \eqref{djb6c2}.
The conclusion of Lemma~\ref{tgv9kd} is that there are at least two instances of $\vphi_{i_{(2)}}$ in $\Phi_1$, where $i_{(2)} > i_{(1)}$ is the smallest index of the simple maps in $\Phi_1$.

We now march inductively toward the desired contradiction.
Isolate the rightmost pair of $\vphi_{i_{(2)}}$ within $\Phi_1$:
\eq{
\Phi_1 = \big(\dots,\vphi_{i_{(2)}},\Phi_2,\vphi_{i_{(2)}}, \Psi_2\big),
}
where $\Psi_2$ and $\Phi_2$ contain no instances of $\vphi_{i_{(2)}}$.
The argument of the previous paragraph goes through verbatim with $(i_{(2)},\Phi_2)$ replacing $(i_{(1)},\Phi_1)$, and $x = (\Psi_2\circ\vphi_{i_{(1)}}\circ\Psi_1)(w\cat u)$.
Therefore, there are at least two instances of $\vphi_{i_{(3)}}$ in $\Phi_2$, where $i_{(3)} > i_{(2)}$ is the smallest index of the simple maps in $\Phi_2$, and so on.
Repeating this procedure, we ultimately find a list $\Phi_{r}$ that is empty (since $i_{(1)} < \cdots < i_{(r)}$ have exhausted all available indices) but does not commute with $\vphi_{i_{(r)}}$ (again by minimality of $r$).
This is an obvious contradiction and completes the proof.
\end{proof}

\subsection{Counting equivalence classes}

Here is an elementary lemma to set up our proof of the main result.

\begin{lemma} \label{nd6g3}
Let $(S_n)_{n\geq0}$ be a sequence of real numbers.
The following two statements are equivalent:
\begin{enumerate}[label=\textup{(\alph*)}]

\item \label{nd6g3_a} $S_n = F_0^{(m)}+F_1^{(m)}+\cdots+F_n^{(m)}$ for every $n\ge0$.

\item \label{nd6g3_b} $S_n = 2^n$ for all $n\in\llbrack0,m\rrbrack$, and $S_n = 2S_{n-1} - S_{n-m-1}$ for all $n\ge m+1$.

\end{enumerate}
\end{lemma}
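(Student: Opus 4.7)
The plan is to reduce the equivalence to a routine verification, exploiting the fact that statement \ref{nd6g3_b} already uniquely characterizes the sequence $(S_n)_{n\ge 0}$. Indeed, \ref{nd6g3_b} prescribes $S_0,\dots,S_m$ explicitly, and the recurrence $S_n = 2S_{n-1} - S_{n-m-1}$ then determines each subsequent term from the previous $m+1$ values. It therefore suffices to prove \ref{nd6g3_a} $\Rightarrow$ \ref{nd6g3_b}; the reverse implication follows by uniqueness.

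For the initial conditions, I would compute $S_n$ for $n \in \llbrack 0, m \rrbrack$ directly from \eqref{fib_a}--\eqref{fib_b}. The case $n = 0$ is immediate from $F_0^{(m)} = 1$, and for $n \in \llbrack 1, m-1 \rrbrack$ a finite geometric sum gives $S_n = 1 + \sum_{k=1}^{n} 2^{k-1} = 2^n$. For $n = m$, the recurrence \eqref{fib_b} yields $F_m^{(m)} = F_{m-1}^{(m)} + \cdots + F_0^{(m)} = S_{m-1} = 2^{m-1}$, hence $S_m = S_{m-1} + F_m^{(m)} = 2 \cdot 2^{m-1} = 2^m$.

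For the recurrence step, the key observation is the telescoping identity $S_n - S_{n-1} = F_n^{(m)}$, which holds by the definition of partial sums. Rearranging, the desired equality $S_n = 2S_{n-1} - S_{n-m-1}$ becomes
\[
F_n^{(m)} \;=\; S_{n-1} - S_{n-m-1} \;=\; F_{n-m}^{(m)} + F_{n-m+1}^{(m)}+\cdots+F_{n-1}^{(m)},
\]
which is exactly the $m$-step Fibonacci recurrence \eqref{fib_b}. This recurrence is valid for all $n \ge m$, so in particular for every $n \ge m+1$ in the hypothesis of \ref{nd6g3_b}.

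I do not anticipate any serious obstacle: both implications collapse to the telescoping of partial sums combined with \eqref{fib_b}. The only minor bookkeeping item is verifying that the index range in \eqref{fib_b} covers our use, i.e.\ that $n - m \ge 0$ when we invoke it, which holds since $n \ge m+1$ implies $n - m \ge 1$.
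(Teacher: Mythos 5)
Your proposal is correct and follows essentially the same route as the paper's proof: verify the initial values by a geometric sum, obtain the recurrence from the telescoping identity $S_n - S_{n-1} = F_n^{(m)}$ together with \eqref{fib_b}, and deduce the converse implication from the fact that statement \ref{nd6g3_b} uniquely determines the sequence. Your separate treatment of the case $n=m$ is a small but welcome point of extra care, since \eqref{fib_a} alone does not give $F_m^{(m)} = 2^{m-1}$.
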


\begin{proof}
\ref{nd6g3_a}$\implies$\ref{nd6g3_b}:
Assume $S_n = F_0^{(m)}+F_1^{(m)}+\cdots+F_n^{(m)}$.
In particular, for $n\in\llbrack0,m\rrbrack$ we have
\eq{
S_n \stackref{fib_a}{=} 1 + 2^0 + \cdots + 2^{n-1} = 2^n.
}
For $n\ge m+1$, we recover the desired recursion as follows:
\eq{
S_n = S_{n-1} + F_n^{(m)}
&\stackref{fib_b}{=} S_{n-1} + F_{n-1}^{(m)} + \cdots + F_{n-m}^{(m)} \\
&\stackrefp{fib_b}{=} S_{n-1} + S_{n-1} - S_{n-m-1}. 
}
\ref{nd6g3_b}$\implies$\ref{nd6g3_a}: 
We just showed that that the sequence $(F_0^{(m)}+F_1^{(m)}+\cdots+F_n^{(m)})_{n\ge0}$ satisfies statement~\ref{nd6g3_b}.
On the other hand, statement~\ref{nd6g3_b} uniquely determines the sequence $(S_n)_{n\ge0}$.
Therefore, we must have $S_n =  F_0^{(m)}+F_1^{(m)}+\cdots+F_n^{(m)}$.
\end{proof}

\begin{proof}[Proof of Theorem~\ref{wordversion}]
Denote the set of equivalence classes in $\{0,1\}^n$ by $\cC_n$.
We wish to show $|\cC_n| = F_0^{(m)}+\cdots+F_n^{(m)}$.
For $n<m+1$, the keyword $a\in\{0,1\}^{m+1}$ is too long to appear in any word, so $|\cC_n| = 2^n$.
By Lemma~\ref{nd6g3}, it now suffices to show
\eeq{\label{recursion}
|\cC_{n}|=2|\cC_{n-1}| - |\cC_{n - m-1}| \quad \text{for all $n\ge m+1$},
}
where $\cC_0$ has a single element: the equivalence class of the empty word.
To this end, consider the equivalence classes in $\{0,1\}^{n-1}$:  
\eeq{ \label{ifb7x}
\cC_{n-1}=\big\{[u^{(1)}],\dots, [u^{(|\cC_{n-1}|)}]\big\}. 
}
Here each $u^{(k)}\in\{0,1\}^{n-1}$ is a representative (chosen arbitrarily) of a different equivalence class. 
We claim that all the equivalence classes of length-$n$ words can be found in the set 
\eeq{ \label{vk7c4f}
\big\{[u^{(1)}\cat 0], [u^{(1)}\cat 1] ,\dots, [u^{(|\cC_{n-1}|)}\cat 0], [u^{(|\cC_{n-1}|)}\cat 1]\big\}.
}
To see this, consider any $v\in\{0,1\}^n$. 
We know from \eqref{ifb7x} that $v_{\parng{1}{n-1}}\sim u^{(k)}$ for some $k\in\llbrack1,|\cC_{n-1}|\rrbrack$. 
By the $\implies$ direction of \eqref{tack_onb}, if the last letter of $v$ is $v_n=0$, then $v\sim u^{(k)}\cat 0$.
If instead $v_n = 1$, then $v\sim u^{(k)}\cat 1$.
We have thus shown that \eqref{vk7c4f} is equal to $\cC_n$.

We wish to determine the cardinality of $\cC_n$, but the list in \eqref{vk7c4f} contains duplicates.
To count the number of the duplicates, we make two observations:
\begin{itemize}
    \item If two equivalence classes listed in \eqref{vk7c4f} are equal, then they are of the form  $[u^{(k)}\cat 0], [u^{(\ell)}\cat 1]$ for some $k \neq \ell$.
    This is because we have assumed $u^{(k)}\not\sim u^{(\ell)}$ whenever $k\neq \ell$, so $u^{(k)}\cat 0\not\sim u^{(\ell)}\cat 0$ and $u^{(k)}\cat 1\not\sim u^{(\ell)}\cat 1$ by the $\impliedby$ direction of \eqref{tack_onb}.
    \item No three elements in the list are equal to each other. 
    To see this, suppose $[u^{(k)}\cat 0]=[u^{(\ell)}\cat 1]$. 
    Then for any $h\in\llbrack1,|\cC_{n-1}|\rrbrack\setminus\{k,\ell\}$, we know from the previous bullet that $[u^{(h)}\cat 0]\neq[u^{(k)}\cat 0]$ and $[u^{(h)}\cat 1]\neq[u^{(\ell)}\cat 1]$.
\end{itemize}
Now define the set of ordered pairs yielding a repeat:
\eq{
\cR = \big\{(k,\ell): [u^{(k)}\cat 0] = [u^{(\ell)}\cat 1]\big\}.
}
It follows from the two observations above that $|\cC_{n}| = 2|\cC_{n-1}| - |\cR|$. 
To complete the proof of \eqref{recursion}, we will exhibit a bijection $f\colon\cR\to\cC_{n-m-1}$.

To define the bijection, consider some $(k,\ell) \in \cR$. 
Then $u^{(k)}\cat 0 \sim u^{(\ell)}\cat 1$, so there exists a list of simple maps $\Phi = (\vphi_{i_r},\dots,\vphi_{i_1})$ such that 
\eeq{ \label{ex3v}
(\vphi_{i_r}\circ\cdots\circ\vphi_{i_1})(u^{(k)}\cat 0)= u^{(\ell)}\cat 1.
}
In particular, since $u^{(k)}\cat 0$ ends in $0$ while $u^{(\ell)}\cat 1$ ends in $1$, there must be some instance of $\vphi_{n-m}$ in $\Phi$ in order to act on the $n$-th letter. 
By looking either immediately before or immediately after applying $\vphi_{n-m}$ for the first time in \eqref{ex3v}, we find some word $x\in\{0,1\}^n$ that is equivalent to $u^{(k)}\cat 0 \sim u^{(\ell)}\cat 1$ and ends in the keyword $a$:
\begin{subequations}
\label{wdef}
\begin{align}
[x] &= [u^{(k)}\cat 0] = [u^{(\ell)}\cat 1], \label{wdef_a} \\
\text{and} \quad x &= (x_1, x_2, \dots, x_{n-m-1}, a_1, \dots, a_{m+1}). \label{wdef_b}
\end{align}
\end{subequations}
Define $f\colon \cR\to \cC_{n-m-1}$ by $f(k,\ell)=[x_{\parng{1}{n-m-1}}]$.
(By the $\impliedby$ direction of \eqref{tack_onb}, any $x$ satisfying \eqref{wdef} will yield the same value of $[x_{\parng{1}{n-m-1}}]$, but we do not need this fact.) 

To show injectivity, suppose $f(k,\ell)=f(k',\ell')$, where $f(k',\ell')=[x'_{\parng{1}{n-m-1}}]$ for some $x'\in\{0,1\}^{n}$ that ends in $a$ and is equivalent to both $u^{(k')}\cat 0$ and $u^{(\ell')}\cat 1$.
We thus have
\eq{
[u^{(k)}\cat0] \stackref{wdef_a}{=}[x] 
&\stackref{wdef_b}{=} [x_{\parng{1}{n-m-1}}\cat a] \\
&\stackrefp{wdef_b}{=} [x'_{\parng{1}{n-m-1}}\cat a] \quad \text{by the $\implies$ direction of \eqref{tack_onb}} \\
&\stackrefp{wdef_b}{=} [x'] = [u^{(k')}\cat0].
}
Now the $\impliedby$ direction of \eqref{tack_onb} forces $u^{(k)} \sim u^{(k')}$.
But each $u^{(k)}$ is a representative of a distinct equivalence class in $\cC_{n-1}$, so we must have $k=k'$.
An analogous argument shows $\ell=\ell'$, so $f$ is injective.

To show surjectivity, consider an arbitrary equivalence class $[y]$ in $\cC_{n-m-1}$. 
Without loss of generality, assume the last letter of $a$ is $a_{m+1}=0$. 
(The case $a_{m+1}=1$ is handled analogously.)
Since the concatenation $y\cat a_{\parng{1}{m}}$ has length $n-1$, we know $y\cat a_{\parng{1}{m}}\sim u^{(k)}$ for some $k$. 
This implies $y\cat a \sim u^{(k)}\cat 0$ by the $\implies$ direction of \eqref{tack_onb}. 
By analogous reasoning, we also have $y\cat (\neg a) \sim u^{(\ell)}\cat 1$ for some $\ell$. 
Since $y\cat a \sim y\cat (\neg a)$, we conclude $[u^{(k)}\cat 0] = [u^{(\ell)}\cat 1]$. 
This means $(k,\ell)\in\cR$, so there exists $x\in\{0,1\}^n$ satisfying \eqref{wdef} such that $f(k,\ell) = [x_{\parng{1}{n-m-1}}]$.
Since $[x_{\parng{1}{n-m-1}}\cat a] = [x] = [u^{(k)}\cat 0] = [y\cat a]$, the $\impliedby$ direction of \eqref{tack_onb} gives $[x_{\parng{1}{n-m-1}}]=[y]$.
Hence $f(k,\ell) = [y]$, thereby proving surjectivity.
\end{proof}



\section{Sizes and structures of equivalence classes}\label{sec_size_structure}

As discussed in Section~\ref{sec_discussion}, the sizes of equivalence classes depend on the keyword $a\in\{0,1\}^{m+1}$, despite the total number of equivalence classes depending only on $m$.
Here we study these equivalence classes more systematically.
Section~\ref{sec_size1} examines size-$1$ equivalence classes, while Section~\ref{sec_structure} considers the graph-theoretic structure of larger equivalence classes.

\subsection{{Equivalence classes of size $1$}} \label{sec_size1}
The following result determines exactly when two keywords $a$ and $b$ yield the same number of size-$1$ equivalence classes.
To help parse statement~\ref{guibas_rem_b}, we point out that the condition $a_{\parng{1}{i}}\in\{a_{\parng{m+2-i}{m+1}}, \neg a_{\parng{m+2-i}{m+1}}\}$
simply means that the first $i$ letters of $a$ are the same as the last $i$ letters, or the same as the negation of the last $i$ letters.

\begin{proposition}\label{guibas_rem}
Let $a,b\in\{0,1\}^{m+1}$.
The following two statements are equivalent.
\begin{enumerate}[label=\textup{(\alph*)}]

\item \label{guibas_rem_a} For every $n$, the equivalence relations $\sim_a$ and $\sim_b$ on $\{0,1\}^n$ induce the same number of size-$1$ equivalence classes.

\item \label{guibas_rem_b} For every $i\in\llbrack1,m+1\rrbrack$, the following equivalence is true:
    \eeq{ \label{erfb8}
    a_{\parng{1}{i}}\in\{a_{\parng{m+2-i}{m+1}}, \neg a_{\parng{m+2-i}{m+1}}\} 
    \iff 
    b_{\parng{1}{i}}\in\{b_{\parng{m+2-i}{m+1}}, \neg b_{\parng{m+2-i}{m+1}}\} .
    }
\end{enumerate}
\end{proposition}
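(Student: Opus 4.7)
My plan is to reinterpret size-$1$ equivalence classes under $\sim_a$ as length-$n$ binary words avoiding both $a$ and $\neg a$ as subwords, and then use the Guibas-Odlyzko generating function for pattern avoidance~\cite{guibas_odlyzko81} to reduce both (a) and (b) to the same polynomial invariant of $a$. By Definition~\ref{applicability}, $[u]_a=\{u\}$ iff no simple map $\vphi_i^{(a)}$ acts nontrivially at $u$, i.e., $u$ contains neither $a$ nor $\neg a$ as a subword. Let $N_n(a)$ count such words and set $F_a(z):=\sum_{n\ge0}N_n(a)z^n$; statement (a) is simply $F_a=F_b$.

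Next, I would set up the correlation data for the forbidden set $\{a,\neg a\}$. Let $p_a(z)$ be the autocorrelation polynomial of $a$, whose coefficient of $z^{m+1-i}$ is $1$ exactly when $a_{\parng{1}{i}}=a_{\parng{m+2-i}{m+1}}$, and let $q_a(z)$ be the cross-correlation polynomial of $a$ with $\neg a$, whose coefficient of $z^{m+1-i}$ is $1$ exactly when $a_{\parng{1}{i}}=\neg a_{\parng{m+2-i}{m+1}}$. These two conditions are mutually exclusive (a nonempty word cannot coincide with its own negation), so $\pi_a(z):=p_a(z)+q_a(z)$ has $\{0,1\}$-coefficients whose support is precisely the set of $i\in\llbrack 1,m+1\rrbrack$ for which the left-hand side of~\eqref{erfb8} holds. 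Because bitwise negation preserves autocorrelations and swaps $a$ with $\neg a$, the $2\times 2$ correlation matrix for $\{a,\neg a\}$ takes the symmetric block form
\[
C_a(z)=\begin{pmatrix} p_a(z) & q_a(z) \\ q_a(z) & p_a(z) \end{pmatrix}.
\]
Feeding this into the Guibas-Odlyzko formula and using $\det C_a=(p_a+q_a)(p_a-q_a)$ together with the analogous factorization of the cofactor contribution $\mathbf 1^T \mathrm{cof}(C_a)\mathbf 1\cdot z^{m+1}=2(p_a-q_a)z^{m+1}$, I expect the common factor $(p_a-q_a)$ to cancel between numerator and denominator, yielding
\[
F_a(z)=\frac{\pi_a(z)}{(1-2z)\pi_a(z)+2z^{m+1}}.
\]

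To finish, I would invert this relation to read $\pi_a(z)=2z^{m+1}F_a(z)/\bigl(1-(1-2z)F_a(z)\bigr)$, showing that $F_a$ and $\pi_a$ determine each other as formal power series. Hence (a) is equivalent to $\pi_a=\pi_b$, which by the coefficient interpretation from the second paragraph is equivalent to (b). The main obstacle is the explicit derivation of the simplified formula for $F_a(z)$: one must carefully apply the Guibas-Odlyzko matrix formula and track the cancellation afforded by the $\bigl(\begin{smallmatrix}p&q\\q&p\end{smallmatrix}\bigr)$ structure. A practical sanity check is that the formula reproduces $F_{110}(z)=(1+z^2)/(1-z)^2$ and $F_{101}(z)=(1+z+z^2)/(1-z-z^2)$, which match the size-$1$ counts for $n\in\{4,5,6\}$ in Table~\ref{differentsizes}, and which differ precisely because $\pi_{110}=1+z^2$ while $\pi_{101}=1+z+z^2$, in accordance with condition (b).
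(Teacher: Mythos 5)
Your proposal is correct and follows essentially the same route as the paper: both reinterpret size-$1$ classes as words avoiding $\{a,\neg a\}$, invoke the Guibas--Odlyzko correlation-polynomial machinery for this two-pattern set, and observe that the resulting generating function depends on the keyword only through the sum of the auto- and cross-correlation polynomials, which it in turn determines (your explicit inversion of the rational formula makes this last step slightly more self-contained than the paper's direct ``iff''). The only differences are cosmetic: you work with the series in $z$ rather than $z^{-1}$ and carry the full-overlap term explicitly, and your numerical checks against Table~\ref{differentsizes} confirm the formula.
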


\begin{remark}[Other sizes of equivalence classes can still differ] \label{rem_uq1}
An example of $a,b$ satisfying statement \ref{guibas_rem_b} is $a = 10001$ and $b=01001$.
For both of these keywords, \eqref{erfb8} is true if and only if $i \in \{1,2,5\}$.
Hence Proposition~\ref{guibas_rem} guarantees that $\sim_a$ and $\sim_b$ induce the same number of equivalence classes of size $s=1$.
Nevertheless, Table~\ref{only1_table} shows that for some larger values of $s$, there are different numbers of size-$s$ equivalence classes.
These differences do not appear, however, until $n=12$.
\end{remark}

\begin{table}[ht]
\caption{Number of equivalence classes of size $s$ induced on $\{0,1\}^n$ by two different keywords ($a$ and $b$). 
By Proposition~\ref{guibas_rem}, these two keywords induce the same number of  equivalence classes of size $s=1$ (\textcolor{blue}{\textbf{bold blue}}), for every $n$.
But for larger $s$, the number of equivalence classes of size $s$ can still differ (\textcolor{red}{\textit{italic red}}).}
\label{only1_table}
\setlength{\columnsep}{-2cm}
\begin{multicols}{2}
    \begin{tabular}{c||cc}
        $n=11$ & $a=10001$ & $b=01001$ \\\hline\hline
        $s=1$ & \textcolor{blue}{\textbf{1,262}} & \textcolor{blue}{\textbf{1,262}}  \\
        $s=2$ & 256 & 256 \\
        $s=3$ & 80 & 80 \\
        $s=4$ & 6 & 6 \\
        $s=5$ & 2 & 2 \\\hline
        total & 1,606 & 1,606
    \end{tabular}
\columnbreak
\vspace*{-0.15in}
        \begin{tabular}{c||cc}
        $n=12$ & $a=10001$ & $b=01001$ \\\hline\hline
        $s=1$ & \textcolor{blue}{\textbf{2,356}} & \textcolor{blue}{\textbf{2,356}} \\
        $s=2$ & 528 & 528 \\
        $s=3$ & 176 & 176 \\
        $s=4$ & \textcolor{red}{\textit{26}} & \textcolor{red}{\textit{24}} \\
        $s=5$ & \textcolor{red}{\textit{8}} & \textcolor{red}{\textit{12}} \\
        $s=6$ & \textcolor{red}{\textit{2}} & \textcolor{red}{\textit{0}} \\\hline
        total & 3,096 & 3,096
    \end{tabular}
\end{multicols}
\end{table}

\begin{proof}[Proof of Proposition~\ref{guibas_rem}]
For a given keyword $a$, denote the number of size-$1$ equivalence classes by
\eq{
c^{(a)}_n = |\{u\in\{0,1\}^n:\, \text{$u$ avoids both the subwords $a$ and $\neg a$}\}|,
}
with the convention $c^{(a)}_0 = 1$.
Denote the generating function of $(c^{(a)}_n)_{n\ge0}$ by
\eeq{ \label{ib7q}
G^{(a)}(z) = \sum_{n=0}^\infty c^{(a)}_nz^{-n},
}
Following the strategy of \cite{guibas_odlyzko81}, we define the ``correlation polynomials''
\begin{subequations} \label{mq8c}
\begin{align}
P_1^{(a)}(z) &= \sum_{i=1}^m z^{m-i}\one\{a_{\parng{1}{i}}=a_{\parng{m+2-i}{m+1}}\},\\
P_2^{(a)}(z) &= \sum_{i=1}^m z^{m-i}\one\{a_{\parng{1}{i}}=\neg a_{\parng{m+2-i}{m+1}}\}.
\end{align}
\end{subequations}
In the notation of \cite{guibas_odlyzko81}, our $P_1^{(a)}(z)$ and $P_2^{(a)}(z)$ are their $AA_z = BB_z$ and $AB_z = BA_z$ respectively, where $A = a$ and $B = \neg a$. 
Furthermore, our $c_n^{(a)}$ and $G^{(a)}(z)$ are their $f(n)$ and $F(z)$, respectively.
Solving the linear system from \cite[Thm.~1]{guibas_odlyzko81} (with $q=2$) gives
\eeq{ \label{mb11z}
G^{(a)}(z) = \frac{z\big(P_1^{(a)}(z) + P_2^{(a)}(z)\big)}{(z-2)\big(P_1^{(a)}(z) + P_2^{(a)}(z)\big)+2}.
}
We thus have
\begin{align*}
c^{(a)}_n = c^{(b)}_n \quad \text{for all $n\ge0$}
\quad&\stackref{ib7q}{\iff}\quad G^{(a)} = G^{(b)} \\
&\stackref{mb11z}{\iff}\quad P_1^{(a)}+P_2^{(a)} = P_1^{(b)}+P_2^{(b)} \\
&\stackref{mq8c}{\iff}\quad \text{\eqref{erfb8} holds for all $i\in\llbrack1,m+1\rrbrack$}. \qedhere
\end{align*}
\end{proof}

Interestingly, the prefix-suffix comparison appearing in \eqref{erfb8} is also related to the commutativity of simple maps.
The following result tells us precisely when two simple maps commute on all of $\{0,1\}^n$.

\begin{proposition}
\label{commute_lem}
Given the keyword $a\in\{0,1\}^{m+1}$ and indices $i < j \le n-m$, let $\dif = j-i$.
There is commutativity of simple maps
\eeq{ \label{commute}
\vphi_i^{(a)} \circ \vphi_j^{(a)}=\vphi_j^{(a)} \circ \vphi_i^{(a)}
}
if and only if one of the following two conditions holds:
\begin{subequations}
\label{commute_reasons}
\begin{align}
\label{indices_commute} 
a_{\parng{1}{m+1-\dif}}\notin\{a_{\parng{\dif+1}{m+1}},\neg a_{\parng{\dif+1}{m+1}}\} \\
\text{or} \quad 
\label{disjoint_commute}
\text{$\vphi_i^{(a)}$ and $\vphi_j^{(a)}$ have disjoint lines of action, i.e.\ $\dif\ge m+1$}.
\end{align}
\end{subequations}
\end{proposition}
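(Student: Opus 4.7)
The plan is to reduce the commutativity question to whether both $\vphi_i^{(a)}$ and $\vphi_j^{(a)}$ can be simultaneously applicable to a common word, and to see that this is governed precisely by the prefix-suffix relation in \eqref{indices_commute}. The case $\dif\ge m+1$ is handled immediately by \eqref{simple_commute}, so I restrict attention to $\dif\le m$. The setup is to decompose any $u\in\{0,1\}^n$ as $u_{\parng{i}{j+m}}=w\cat v\cat y$, with $w=u_{\parng{i}{j-1}}$ and $y=u_{\parng{i+m+1}{j+m}}$ each of length $\dif$, and with the overlap block $v=u_{\parng{j}{i+m}}$ of length $m+1-\dif$. By Definition~\ref{applicability}, $\vphi_i^{(a)}$ acts on $u$ iff $w\cat v\in\{a,\neg a\}$ and $\vphi_j^{(a)}$ acts on $u$ iff $v\cat y\in\{a,\neg a\}$. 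Introducing $A=\{a_{\parng{\dif+1}{m+1}},\neg a_{\parng{\dif+1}{m+1}}\}$ and $B=\{a_{\parng{1}{m+1-\dif}},\neg a_{\parng{1}{m+1-\dif}}\}$, these conditions force $v\in A$ and $v\in B$ respectively; since both $A$ and $B$ are closed under negation, \eqref{indices_commute} is equivalent to $A\cap B=\varnothing$.

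For the sufficient direction, assume $A\cap B=\varnothing$. Then at most one of $\vphi_i^{(a)},\vphi_j^{(a)}$ can act on any $u$, and commutativity follows by a short case split. If neither acts, both orders fix $u$. If only $\vphi_i^{(a)}$ acts, then $v\in A$ and applying $\vphi_i^{(a)}$ changes $v$ to $\neg v\in A$, still outside $B$ by disjointness, so $\vphi_j^{(a)}$ also fails to act on $\vphi_i^{(a)}(u)$; both orders thus evaluate to $\vphi_i^{(a)}(u)$. The remaining sub-case is symmetric.

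For the necessary direction, suppose both \eqref{indices_commute} and \eqref{disjoint_commute} fail, so $\dif\le m$ and there exists $c\in\{a,\neg a\}$ satisfying $a_{\parng{\dif+1}{m+1}}=c_{\parng{1}{m+1-\dif}}$. I would then exhibit a word witnessing failure of \eqref{commute}: take $u\in\{0,1\}^n$ with $u_{\parng{i}{j+m}}=a_{\parng{1}{\dif}}\cat a_{\parng{\dif+1}{m+1}}\cat c_{\parng{m+2-\dif}{m+1}}$ and arbitrary letters outside that range. By construction $u_{\parng{i}{i+m}}=a$ and $u_{\parng{j}{j+m}}=c$, so both maps act on $u$. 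A direct check---splitting into the cases $c=a$ and $c=\neg a$---shows that after either single substitution the negated middle block $\neg v$, combined with its unchanged neighbor $y$ or $w$, no longer matches $a$ or $\neg a$, so the other map becomes disabled. Consequently $\vphi_j^{(a)}(\vphi_i^{(a)}(u))=\vphi_i^{(a)}(u)$ and $\vphi_i^{(a)}(\vphi_j^{(a)}(u))=\vphi_j^{(a)}(u)$, and these two words differ at letter $i$, which $\vphi_i^{(a)}$ flips but $\vphi_j^{(a)}$ leaves untouched. The main bookkeeping step is tracking the sign $c$ through the disabling check; the verification is elementary but must be carried out carefully in both sub-cases.
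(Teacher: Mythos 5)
Your proof is correct. The sufficiency direction is essentially the paper's argument repackaged: the sets $A$ and $B$ you introduce are exactly the suffix/prefix memberships checked in the paper's Cases 2 and 3, and the observation that \eqref{indices_commute} is equivalent to $A\cap B=\varnothing$ (because both sets are closed under negation) is the same one the paper uses implicitly. Where you genuinely differ is in the witness for the converse. The paper builds $u$ with $u_{\parng{i}{j+m}}=a\cat b$, choosing $b$ so that $a\not\applies_j u$ but $a\applies_j \vphi_i^{(a)}(u)$: applying $\vphi_i^{(a)}$ \emph{enables} $\vphi_j^{(a)}$, as in \eqref{a7v2}. Your witness $u_{\parng{i}{j+m}}=a_{\parng{1}{\dif}}\cat a_{\parng{\dif+1}{m+1}}\cat c_{\parng{m+2-\dif}{m+1}}$ instead has both maps applicable at the outset, and each application \emph{disables} the other, so the two orders produce $\vphi_i^{(a)}(u)$ and $\vphi_j^{(a)}(u)$, which differ at letter $i$. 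Both witnesses are valid; yours is arguably more symmetric. One simplification: the disabling check does not actually require splitting into $c=a$ versus $c=\neg a$. After applying $\vphi_j^{(a)}$, the window at $i$ reads $a_{\parng{1}{\dif}}\cat \neg a_{\parng{\dif+1}{m+1}}$, which differs from $a$ in its last $m+1-\dif\ge 1$ letters and from $\neg a$ in its first $\dif\ge 1$ letters; symmetrically, after applying $\vphi_i^{(a)}$, the window at $j$ reads $\neg c_{\parng{1}{m+1-\dif}}\cat c_{\parng{m+2-\dif}{m+1}}$, which lies outside $\{c,\neg c\}=\{a,\neg a\}$ for the same reason. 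So the sign of $c$ never enters the bookkeeping.
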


In words, \eqref{indices_commute} says the subword of length $m+1-\dif$ at the start of $a$ is \textit{not} equal to the subword of the same length at the end of $a$, nor equal to the latter subword's negation.
This is impossible if $\dif=m$, as the subword in question is a single letter $a_1$, which must be equal to either $a_{m+1}$ or $\neg a_{m+1}$.
Therefore, $\vphi_i^{(a)}$ and $\vphi_{i+m}^{(a)}$ are never fully commutative; \eqref{a7v2} is an example with $m=2$.

\begin{proof}[Proof of Proposition~\ref{commute_lem}]
First we prove \eqref{commute_reasons}$\implies$\eqref{commute}.
The implication \eqref{disjoint_commute}$\implies$\eqref{commute} was already noted in \eqref{simple_commute}.
So let us assume \eqref{indices_commute} and that the lines of action of $\vphi_i$ and $\vphi_j$ intersect, i.e.~$\dif\le m$.
We will verify the equality 
\eeq{ \label{7bjc4c}
(\vphi_i^{(a)}\circ \vphi_j^{(a)})(u) = (\vphi_j^{(a)}\circ \vphi_i^{(a)})(u)
}
for all $u\in\{0,1\}^n$ by considering three cases. 

\smallskip

\noindent \textbf{Case 1:} $a\not\applies_i u$ and $a\not\applies_j u$.

Both sides of \eqref{7bjc4c} are equal to $u$ in this case.

\smallskip

\noindent \textbf{Case 2:} $a\not\applies_i u$ and $a\applies_j u$.

Now the right-hand side of \eqref{7bjc4c} is equal to $\vphi_j^{(a)}(u)$.
For the left-hand side to agree, we need to show
$a\not\applies_i \vphi_j^{(a)}(u)$. 
Since $a\applies_j u$, we have $u_{\parng{j}{j+m}}\in\{a,\neg a\}$.
Omitting the last $\dif$ letters from this subword, we have
\begin{align*}
    u_{\parng{j}{i+m}}&\in\{a_{\parng{1}{m+1-\dif}},\neg a_{\parng{1}{m+1-\dif}}\}, \\
    \text{and thus} \quad (\vphi_j^{(a)}(u))_{\parng{j}{i+m}} = \neg u_{\parng{j}{i+m}} &\in \{\neg a_{\parng{1}{m+1-\dif}},a_{\parng{1}{m+1-\dif}}\}.
\intertext{Since we have assumed \eqref{indices_commute}, it follows that}
    (\vphi_j^{(a)}(u))_{\parng{j}{i+m}}
    &\notin\{\neg a_{\parng{\dif+1}{m+1}},a_{\parng{\dif+1}{m+1}}\},\\
    \text{and thus} \quad (\vphi_j^{(a)}(u))_{\parng{i}{i+m}}&\notin \{\neg a, a\}.
\end{align*}
So $a\not\applies_i \vphi_j^{(a)}(u)$, as desired.
\smallskip

\noindent \textbf{Case 3:} $a\applies_i u$.

In this case we have $u_{\parng{i}{i+m}}\in\{a,\neg a\}$. 
Omitting the first $\dif$ letters from this subword, we have
\begin{align}
\notag
u_{\parng{j}{i+m}}&\in\{a_{\parng{\dif+1}{m+1}},\neg a_{\parng{\dif+1}{m+1}}\}.
\intertext{Since we have assumed \eqref{indices_commute}, it follows that}
u_{\parng{j}{i+m}}&\notin\{\neg a_{\parng{1}{m+1-\dif}}, a_{\parng{1}{m+1-\dif}}\} \notag \\
\text{and thus}\quad u_{\parng{j}{j+m}}&\notin\{a,\neg a\}.
\label{nophi}
\intertext{In addition, $(\vphi_i^{(a)}(u))_{\parng{i}{i+m}} = \neg u_{\parng{i}{i+m}}\in\{\neg a,a\}$, so the exact same logic shows}
(\vphi_i^{(a)}(u))_{\parng{j}{j+m}}&\notin\{a,\neg a\}.
\label{yesphi}
\end{align}
Now we compare the two sides of \eqref{7bjc4c}.
On one hand, \eqref{nophi} says $a\not\applies_j u$, so the left-hand side of \eqref{7bjc4c} is  equal to $\vphi_i(u)$.
On the other hand, \eqref{yesphi} says $a\not\applies_j\vphi_i^{(a)}(u)$, so the right-hand side of \eqref{7bjc4c} is also equal to $\vphi_i^{(a)}(u)$. 
This completes the proof of \eqref{commute_reasons}$\implies$\eqref{commute}.

Now we prove \eqref{commute}$\implies$\eqref{commute_reasons} by contrapositive.
Suppose that $\vphi_i^{(a)}$ and $\vphi_j^{(a)}$ have intersecting lines of action (i.e.~$\dif\le m$) and that 
\eeq{\label{nocommute}
a_{\parng{1}{m+1-\dif}}\in\{a_{\parng{\dif+1}{m+1}},\neg a_{\parng{\dif+1}{m+1}}\}.
}
We will exhibit a word $u\in\{0,1\}^n$ such that $(\vphi_i^{(a)}\circ\vphi_j^{(a)})(u)\neq(\vphi_j^{(a)}\circ\vphi_i^{(a)})(u)$. 
Define the word $b\in\{0,1\}^{\dif}$ to be the last $\dif$ letters of either $a$ or $\neg a$, depending on the sign in \eqref{nocommute}:
\eq{
b = \begin{cases}
    a_{\parng{m+1-\dif+1}{m+1}} &\text{if $a_{\parng{1}{m+1-\dif}} = \neg a_{\parng{\dif+1}{m+1}}$} \\
    \neg a_{\parng{m+1-\dif+1}{m+1}} &\text{if $a_{\parng{1}{m+1-\dif}} = a_{\parng{\dif+1}{m+1}}$.}
\end{cases}
}
The definition of $b$ is meant to swap the cases in \eqref{nocommute} in the sense that
\begin{align}
\label{nv62}
a_{\parng{\dif+1}{m+1}}\cat b &\notin \{a,\neg a\}, \\
\label{nv63}
\text{whereas}\quad (\neg a_{\parng{\dif+1}{m+1}})\cat b &\in \{a,\neg a\}.
\end{align}
Now let $u\in\{0,1\}^n$ be any word such that 
\eq{
u_{\parng{i}{j+m}}=a \cat b.
}
Then clearly $a\applies_i u$, while $a\not\applies_j u$ since 
\eq{
u_{\parng{j}{j+m}} = a_{\parng{\dif+1}{m+1}} \cat b \stackref{nv62}{\notin}\{a,\neg a\}.
}
Thus $(\vphi_i^{(a)}\circ \vphi_j^{(a)})(u)=\vphi_i^{(a)}(u)$. 
On the other hand, we know $a\applies_j\vphi_i^{(a)}(u)$ since 
\eq{
(\vphi_i^{(a)}(u))_{\parng{j}{j+m}}
=\neg a_{\parng{\dif+1}{m+1}}\cat b
\stackref{nv63}{\in}\{a,\neg a\}.
}
Thus $(\vphi_j^{(a)}\circ \vphi_i^{(a)})(u)\neq \vphi_i^{(a)}(u)=(\vphi_i^{(a)}\circ \vphi_j^{(a)})(u)$, so we do not have commutativity.
\end{proof}

\subsection{Graph structure within equivalence classes} \label{sec_structure}

Each keyword induces a graph as follows.

\begin{definition} \label{def_graph}
Given the keyword $a\in\{0,1\}^{m+1}$, let $\cG_n^{(a)}$ denote the graph whose vertex set is $\{0,1\}^n$, and edge $\{u,v\}$ is present if and only if $v$ is obtained from $u$ by a \textit{single} substitution $a\mapsto \neg a$ or $\neg a\mapsto a$, i.e.~$\vphi_i^{(a)}(u) = v$ for some $i\in\llbrack1,n-m\rrbrack$.
\end{definition}

The connected components of $\cG_n^{(a)}$ are the equivalence classes under $\sim_a$, and the graph distance is
\eq{
d^{(a)}(u,v)\coloneqq\inf\big\{r:\, \exists\ \text{$i_1,\dots,i_r$ such that $(\vphi_{i_r}^{(a)} \circ \dots \circ \vphi_{i_1}^{(a)})(u)=v$}\big\}.
}
By convention we take $d^{(a)}(u,u)=0$.
Note that $d^{(a)}(u,v)<\infty$ if and only if $u\sim_a v$.
See Figure~\ref{graphex} for an example.

\begin{figure}[ht]
    \centering
\begin{tikzpicture} 
    \draw[fill=black] (-4,1) circle (3pt);
    \draw[fill=black] (-4,-1) circle (3pt);
    \draw[fill=black] (-2,0) circle (3pt);
    \draw[fill=black] (0,1) circle (3pt);
    \draw[fill=black] (0,-1) circle (3pt);
    \draw[fill=black] (2,0) circle (3pt);
    \draw[fill=black] (4,1) circle (3pt);
    \draw[fill=black] (4,-1) circle (3pt);
    \node at (-4,.5) {011011};
    \node at (-4,-1.5) {001001};
    \node at (-2,-.5) {010101};
    \node at (0,.5) {101101};
    \node at (0,-1.5) {010010};
    \node at (2,-.5) {010101};
    \node at (4,.5) {110110};
    \node at (4,-1.5) {100100};
    \draw[thick] (-4,1) -- (-2,0) -- (0,1) -- (2,0) -- (4,1);
    \draw[thick] (-4,-1) -- (-2,0) -- (0,-1) -- (2,0) -- (4,-1);
\end{tikzpicture}
    \caption{The unique size-8 component of $\cG_6^{(101)}$ from Table~\ref{differentsizes} (right).}
    \label{graphex}
\end{figure}
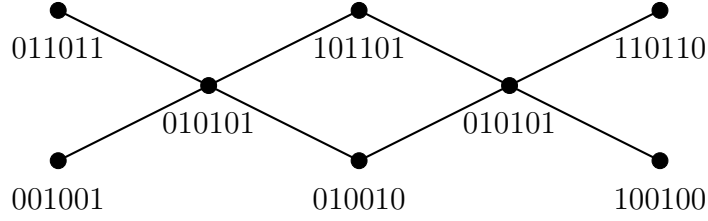

A first natural question is whether two keywords yield isomorphic graphs.
For instance, clearly $\cG_n^{(a)}$ and $\cG_n^{(\neg a)}$ are the same graph, since $a$ and $\neg a$ are logically interchangeable in Definition~\ref{def_graph}.
In addition to negation, the next definition names two other transformations we will consider.

\begin{definition} \label{def_seminegation}
    For $u=(u_1,\dots,u_n)\in\{0,1\}^n$, the \textit{reversal} of $u$ is
    \eeq{ \label{rev_def}
    \rev{u} \coloneqq (u_n,\dots,u_1).
    }
    The \textit{seminegation} of $u$ is the negation of only the letters in even positions:
    \eeq{ \label{semi_def}
    \semi{u} \coloneqq (u_1, \neg u_2, u_3, \neg u_4, \dots, (\neg)^{n-1}u_{n}).
    }
\end{definition}

By composing the operations of negation, seminegation, and reversal, one can produce from a single keyword a family of either $4$ or $8$ keywords that---according to the following result---all yield isomorphic graphs.

\begin{proposition} \label{thm_iso}
    For any keyword $a\in\{0,1\}^{m+1}$ and any $n\ge1$, \[\cG_n^{(a)}=\cG_n^{(\neg a)}\cong \cG_n^{(\rev a)} \cong \cG_n^{(\semi{a})}.\]
\end{proposition}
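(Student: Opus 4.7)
The identity $\cG_n^{(a)}=\cG_n^{(\neg a)}$ is immediate from Definition~\ref{def_graph}: the edge condition ``$u_{\parng{i}{i+m}}\in\{a,\neg a\}$ and $v$ is obtained by swapping'' is manifestly unchanged when $a$ is replaced by $\neg a$. The plan for the two isomorphisms is to take the obvious candidate maps---reversal $\rho$ for $\cG_n^{(a)}\cong\cG_n^{(\rev a)}$ and seminegation $\semi{\cdot}$ for $\cG_n^{(a)}\cong\cG_n^{(\semi a)}$---observe that each is an involution on $\{0,1\}^n$ (hence a bijection), and verify that it carries edges to edges when the keyword is transformed accordingly.

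For the reversal, I plan a direct index calculation: if $u_{\parng{i}{i+m}}\in\{a,\neg a\}$, then $(\rev u)_{\parng{n-i-m+1}{n-i+1}}$ is the letter-reversal of $u_{\parng{i}{i+m}}$, and since $\rev{(\neg a)}=\neg\rev{a}$, this subword lies in $\{\rev a,\neg\rev a\}$. The substitution swapping $a\leftrightarrow\neg a$ at position $i$ in $u$ therefore corresponds, after applying $\rho$, to the substitution swapping $\rev a\leftrightarrow\neg\rev a$ at position $n-i-m+1$ in $\rev u$. Involutivity of $\rho$ gives the reverse implication, establishing the isomorphism.

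For the seminegation, the main content is a parity case analysis that I expect to be the only real obstacle (albeit a mechanical one). Writing $\semi u_{i+k}=u_{i+k}$ when $i+k$ is odd and $\semi u_{i+k}=\neg u_{i+k}$ when $i+k$ is even, one finds
\[
\semi u_{\parng{i}{i+m}}=\begin{cases}\semi{(u_{\parng{i}{i+m}})} & \text{if $i$ is odd,}\\ \neg\,\semi{(u_{\parng{i}{i+m}})} & \text{if $i$ is even.}\end{cases}
\]
Since bitwise negation commutes with seminegation, this shows that $u_{\parng{i}{i+m}}\in\{a,\neg a\}$ implies $\semi u_{\parng{i}{i+m}}\in\{\semi a,\neg\semi a\}$, and moreover that the substitution $a\leftrightarrow\neg a$ applied to $u$ at position $i$ becomes the substitution $\semi a\leftrightarrow\neg\semi a$ applied to $\semi u$ at the same position $i$. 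Outside the window $\llbrack i,i+m\rrbrack$, seminegation commutes trivially with the substitution because $u$ and $v$ agree there. Hence $\semi{\cdot}$ is a graph homomorphism $\cG_n^{(a)}\to\cG_n^{(\semi a)}$, and being an involution it is an isomorphism.

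The only step needing real bookkeeping is the parity computation for seminegation; everything else reduces to the observation that $\rho$ and $\semi{\cdot}$ are self-inverse bijections whose interaction with a length-$(m{+}1)$ window preserves the set $\{a,\neg a\}$ (up to the corresponding keyword transformation) and converts local substitutions into local substitutions.
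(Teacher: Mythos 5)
Your proposal is correct and follows essentially the same route as the paper: the identity map for $\neg a$, reversal with the index shift $i\mapsto n-i-m+1$, and seminegation with the same parity case analysis on $i$, in each case using that the transformation commutes with negation and that involutivity supplies the inverse direction. The only difference is one of packaging---the paper factors the edge-conjugation argument into a general lemma (Lemma~\ref{generalf}) phrased as $f\circ\vphi_i^{(a)}=\vphi_{\sigma(i)}^{(b)}\circ f$, which additionally yields preservation of graph distance, whereas you verify edge-preservation directly.
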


Our proof will invoke the following sufficient condition for two keywords to induce isomorphic graphs.

\begin{lemma}\label{generalf}
    Fix keywords $a,b\in\{0,1\}^{m+1}$.
    Suppose there exist bijections $f\colon\{0,1\}^n \to \{0,1\}^n$ and $\sigma\colon \llbrack 1,n\rrbrack \to \llbrack 1,n\rrbrack$ such that
    \eeq{ \label{fcondition}
    (f\circ\vphi_i^{(a)})(u) = (\vphi_{\sigma(i)}^{(b)}\circ f)(u)
    \quad \text{for all $u\in\{0,1\}^n$ and $i\in\llbrack1,n\rrbrack$.}
    }
    Then $d^{(a)}(u, v) = d^{(b)}(f(u),f(v))$ for all $u,v\in\{0,1\}^n$.
\end{lemma}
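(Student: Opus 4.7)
My approach will be purely algebraic: hypothesis \eqref{fcondition} says that $f$ intertwines each simple $a$-map $\vphi_i^{(a)}$ with its $b$-counterpart $\vphi_{\sigma(i)}^{(b)}$. The plan is, first, to push any optimal composition of simple $a$-maps witnessing $d^{(a)}(u,v)$ through $f$ one factor at a time in order to produce an equally long composition of simple $b$-maps from $f(u)$ to $f(v)$; and second, to observe that the same intertwining relation holds for the pair $(f^{-1},\sigma^{-1})$ (from $b$ back to $a$), so that running the first step in the reverse direction yields the matching inequality.

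For the forward inequality $d^{(b)}(f(u),f(v)) \le d^{(a)}(u,v)$, I will assume $r = d^{(a)}(u,v) < \infty$ with witnessing indices $i_1,\dots,i_r$. A straightforward induction on $r$, invoking \eqref{fcondition} to peel off the leftmost factor at each step, should yield
\[
f(v) = f\bigl((\vphi_{i_r}^{(a)}\circ\cdots\circ\vphi_{i_1}^{(a)})(u)\bigr) = (\vphi_{\sigma(i_r)}^{(b)}\circ\cdots\circ\vphi_{\sigma(i_1)}^{(b)})(f(u)),
\]
so that the indices $\sigma(i_1),\dots,\sigma(i_r)$ witness $d^{(b)}(f(u),f(v)) \le r$. (The case $d^{(a)}(u,v) = \infty$ makes the inequality vacuous.)

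For the reverse inequality, the key observation is that \eqref{fcondition} transfers cleanly to inverses. Because every simple map is an involution, I can rearrange \eqref{fcondition} as $\vphi_i^{(a)}\circ f^{-1} = f^{-1}\circ \vphi_{\sigma(i)}^{(b)}$, and after the relabeling $j = \sigma(i)$ (permissible since $\sigma$ is a bijection) this becomes
\[
(f^{-1}\circ \vphi_j^{(b)})(w) = (\vphi_{\sigma^{-1}(j)}^{(a)}\circ f^{-1})(w) \quad \text{for all } w\in\{0,1\}^n\ \text{and}\ j\in\llbrack 1,n\rrbrack.
\]
This is precisely hypothesis \eqref{fcondition} with the roles of $a$ and $b$ interchanged and $(f,\sigma)$ replaced by $(f^{-1},\sigma^{-1})$. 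Applying the forward direction to this swapped data will then give $d^{(a)}(u,v) = d^{(a)}(f^{-1}(f(u)),f^{-1}(f(v))) \le d^{(b)}(f(u),f(v))$, which combined with the forward inequality closes the argument.

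I do not expect any genuine obstacle here; the whole proof is a symmetry routine. The one point worth flagging is the passage to inverses, which crucially relies on each $\vphi_i^{(a)}$ being its own inverse—without the involution property, the hypothesis would not automatically propagate to $(f^{-1},\sigma^{-1})$, and only the inequality $d^{(b)}(f(u),f(v)) \le d^{(a)}(u,v)$ would be available.
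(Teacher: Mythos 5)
Your proof is correct and takes essentially the same route as the paper: an induction on \eqref{fcondition} to intertwine entire compositions of simple maps, yielding $d^{(b)}(f(u),f(v))\le d^{(a)}(u,v)$, followed by swapping the roles of $a$ and $b$ via the invertibility of $f$ and $\sigma$ to get the reverse inequality. One minor remark: the inverse intertwining relation already follows from conjugating \eqref{fcondition} by $f^{-1}$, so bijectivity of $f$ suffices and the involution property of the simple maps is not actually essential at that step (though your use of it is valid).
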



\begin{proof}
Induction on \eqref{fcondition} yields
\eeq{ \label{bcnex3}
f\circ\vphi_{i_r}^{(a)}\circ\cdots\circ\vphi_{i_1}^{(a)}
= \vphi_{\sigma(i_r)}^{(b)}\circ\cdots\circ\vphi_{\sigma(i_1)}^{(b)}\circ f.
}
We thus have the following implications:
\eq{ 
&\exists\ \text{$i_1,\dots,i_r$ such that $(\vphi_{i_r}^{(a)} \circ \dots \circ \vphi_{i_1}^{(a)})(u)=v$} \\
\implies\quad &\exists\ \text{$i_1,\dots,i_r$ such that $(f\circ \vphi_{i_r}^{(a)} \circ \dots \circ \vphi_{i_1}^{(a)})(u)=f(v)$} \\
\stackref{bcnex3}{\implies}\quad &
\exists\ \text{$i_1,\dots,i_r$ such that $(\vphi_{\sigma(i_r)}^{(b)} \circ \dots \circ \vphi_{\sigma(i_1)}^{(b)})(f(u))=f(v)$.}
}
Hence $d^{(a)}(u,v)\geq d^{(b)}(f(u),f(v))$. 
Since $f$ and $\sigma$ are invertible, we can interchange the roles of $a$ and $b$ to obtain the reverse inequality.
\end{proof}

\begin{proof}[Proof of Proposition~\ref{thm_iso}]
The equality $\cG_n^{(a)}=\cG_n^{(\neg a)}$ is immediate from the fact that $\vphi_i^{(a)} = \vphi_i^{(\neg a)}$ for any $a\in\{0,1\}^{m+1}$.

The isomorphism $\cG_n^{(a)}\cong \cG_n^{(\rev{a})}$ is obtained by taking $f(u)=\rev{u}$ and $\sigma(i) = n-i-m+1$ in Lemma~\ref{generalf}. 
To verify \eqref{fcondition}, we set $v = \vphi_i^{(a)}(u)$, $w = \vphi_{\sigma(i)}^{(\rev{a})}(\rev{u})$, and proceed to show $\rev{v} = w$. 
On one hand, the $j$-th letter of $\rev{v}$ is
\eq{
\rev{v}_j \stackref{rev_def}{=} v_{n-j+1}
&\stackref{def_simple_map}{=}\begin{cases}
    \neg u_{n-j+1} &\text{if $a\applies_i u$ and $n-j+1\in\llbrack i,i+m\rrbrack$} \\
    u_{n-j+1} &\text{otherwise}
\end{cases} \\
&\stackrefp{def_simple_map}{=}\begin{cases}
    \neg u_{n-j+1} &\text{if $a\applies_i u$ and $j\in\llbrack \sigma(i),\sigma(i)+m\rrbrack$} \\
    u_{n-j+1} &\text{otherwise.}
\end{cases}
}
On the other hand, the $j$-th letter of $w$ is
\eq{
w_j &\stackrefpp{def_simple_map}{rev_def}{=}\begin{cases}
\neg \rev{u}_j &\text{if $\rev{a}\applies_{\sigma(i)}\rev{u}$ and $j\in\llbrack\sigma(i),\sigma(i)+m\rrbrack$}\\
\rev{u}_j &\text{otherwise}
\end{cases} \\
&\stackref{rev_def}{=}\begin{cases}
\neg u_{n-j+1} &\text{if $\rev{a}\applies_{\sigma(i)}\rev{u}$ and $j\in\llbrack\sigma(i),\sigma(i)+m\rrbrack$}\\
u_{n-j+1} &\text{otherwise.}
\end{cases}
}
Comparing the two previous displays, we see that $\rev{v} = w$ provided the following equivalence is true:
\eeq{ \label{kbn823}
a\applies_i u \quad \iff \quad \rev{a}\applies_{\sigma(i)} \rev{u}.
}
To prove \eqref{kbn823}, we observe that the $(i+m)$-th letter of $u$ becomes the $\sigma(i)$-th letter of $\rev{u}$.
Hence
\eeq{
\label{ef8cx}
\rev{u_{\parng{i}{i+m}}} = \rev{u}_{\parng{\sigma(i)}{\sigma(i)+m}},
}
so we have
\begin{align*}
a\applies_i u \  &\iff \  u_{\parng{i}{i+m}}\in\{a,\neg a\} \\
&\iff \  \rev{u_{\parng{i}{i+m}}}\in\{\rev{a},\rev{\neg a}\} \qquad  \text{since reversal is a bijection} \\
&\iff \  \rev{u_{\parng{i}{i+m}}}\in\{\rev{a},\neg\rev a\} \qquad  \text{since reversal commutes with negation} \\
&\stackref{ef8cx}{\iff} \  \rev{u}_{\parng{\sigma(i)}{\sigma(i)+m}}\in\{\rev{a},\neg\rev a\}
\  \iff \ 
\rev{a}\applies_{\sigma(i)} \rev{u}.
\end{align*}

Finally, the isomorphism $\cG_n^{(a)}\cong \cG_n^{(\semi{a})}$ is obtained by taking $f(u) = \semi{u}$ and $\sigma(i) = i$ in Lemma~\ref{generalf}. 
To verify \eqref{fcondition}, we set $v = \vphi_i^{(a)}(u)$, $w = \vphi_i^{(\semi{a})}(\semi{u})$, and proceed to show $\semi{v} = w$.
On one hand, the $j$-th letter of $\semi{v}$ is
\eq{
(\semi{v})_j \stackref{semi_def}{=} (\neg)^{j-1}v_j
\stackref{def_simple_map}{=} \begin{cases}
(\neg)^ju_j &\text{if $a\applies_i u$ and $j\in\llbrack i,i+m\rrbrack$} \\
(\neg)^{j-1}u_j &\text{otherwise}.
\end{cases}
}
On the other hand, the $j$-th letter of $w$ is
\eq{
w_j &\stackrefpp{def_simple_map}{semi_def}{=} \begin{cases}
\neg(\semi{u})_j &\text{if $\semi{a}\applies_i \semi{u}$ and $j\in\llbrack i,i+m\rrbrack$} \\
(\semi{u})_j &\text{otherwise}.
\end{cases} \\
&\stackref{semi_def}{=}\begin{cases}
(\neg)^{j}u_j &\text{if $\semi{a}\applies_i \semi{u}$ and $j\in\llbrack i,i+m\rrbrack$} \\
(\neg)^{j-1}u_j &\text{otherwise}.
\end{cases}
}
Comparing the two previous displays, we see that $\semi{v} = w$ provided the following equivalence is true:
\eeq{ \label{uv711}
a\applies_i u \quad \iff \quad \semi{a}\applies_i \semi{u}.
}
To prove \eqref{uv711}, we compare a subword of the seminegation to the seminegation of the subword:
\eeq{ \label{ef7cx}
(\semi{u})_{\parng{i}{i+m}} = \begin{cases}
\semi{u_{\parng{i}{i+m}}} &\text{if $i$ is odd} \\
\neg(\semi{u_{\parng{i}{i+m}}}) &\text{if $i$ is even}.
\end{cases}
}
We now obtain \eqref{uv711} as follows:
\begin{align*}
a\applies_i u \  &\iff \  u_{\parng{i}{i+m}}\in\{a,\neg a\} \\
&\iff \  \semi{u_{\parng{i}{i+m}}}\in\{\semi{a},\semi{\neg a}\} \qquad  \text{since seminegation is a bijection on $\{0,1\}^{m+1}$} \\
&\iff \  \semi{u_{\parng{i}{i+m}}}\in\{\semi{a},\neg(\semi a)\} \qquad  \text{since seminegation commutes with negation} \\
&\stackref{ef7cx}{\iff} \  (\semi{u})_{\parng{i}{i+m}}\in\{\semi{a},\neg(\semi a)\}
\  \iff \ 
\semi{a}\applies_i \semi{u}. \qedhere
\end{align*}
\end{proof}

Our final result gives a restriction on the graphs obtainable from Definition~\ref{def_graph}.

\begin{proposition}\label{bipartite}
    For any keyword $a\in\{0,1\}^{m+1}$ and any $n\ge1$, $\cG_n^{(a)}$ is bipartite. 
\end{proposition}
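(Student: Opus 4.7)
The plan is to exhibit an explicit proper $2$-coloring of the vertex set $\{0,1\}^n$ of $\cG_n^{(a)}$; bipartiteness follows immediately. Define $\chi\colon \{0,1\}^n \to \{0,1\}$ by
\[
\chi(u) \coloneqq \sum_{k=0}^{\lfloor (n-1)/(m+1)\rfloor} u_{1 + k(m+1)} \pmod{2},
\]
so that $\chi(u)$ records (modulo $2$) the sum of the bits of $u$ at positions $j\in\llbrack 1,n\rrbrack$ with $j\equiv 1 \pmod{m+1}$.

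The key step is to check that $\chi(u)\neq\chi(v)$ for every edge $\{u,v\}$ of $\cG_n^{(a)}$. Such an edge arises from a nontrivial application $v=\vphi_i^{(a)}(u)$ for some $i\in\llbrack 1,n-m\rrbrack$, which by \eqref{def_simple_map} flips exactly the $m+1$ consecutive bits at positions $i, i+1, \dots, i+m$. These $m+1$ consecutive integers form a complete set of residues modulo $m+1$, so exactly one of them is congruent to $1\pmod{m+1}$. The flip at that single position changes $\chi$ by $1\pmod 2$, while the flips at the remaining $m$ positions do not affect $\chi$. Hence $\chi(u)\neq\chi(v)$, and the preimages $\chi^{-1}(0)$ and $\chi^{-1}(1)$ partition the vertex set into two independent sets.

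The only nontrivial choice is the invariant itself. The first instinct---the parity of the total number of $1$'s in $u$---works only when $m+1$ is odd, since a substitution flips $m+1$ bits simultaneously. Any working invariant must be a weighted bit-parity $\sum_j c_j u_j\pmod 2$ whose weights satisfy $\sum_{j=i}^{i+m} c_j \equiv 1\pmod 2$ for every $i$; subtracting consecutive constraints forces $c_j$ to be periodic with period $m+1$, and the simplest such weight sequence---the indicator of positions congruent to $1$ modulo $m+1$---is what defines $\chi$ above. No other obstacle is anticipated.
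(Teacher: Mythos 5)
Your proof is correct, and it takes a genuinely different route from the paper. You exhibit an explicit proper $2$-coloring: $\chi(u)$ is the parity of the bits of $u$ at positions congruent to $1$ modulo $m+1$, and since every edge of $\cG_n^{(a)}$ flips exactly the $m+1$ consecutive bits in the line of action $\llbrack i,i+m\rrbrack$ --- a complete residue system modulo $m+1$, hence containing exactly one position counted by $\chi$ --- the color changes across every edge. The paper instead proves the absence of odd cycles directly: given a list of simple maps acting completely on $u$ and returning to $u$, it inducts on the distinct indices in increasing order to show each index occurs an even number of times, so the cycle length is even. Your argument is shorter and yields strictly more: it produces the bipartition explicitly (as the level sets of an $\F_2$-linear functional) rather than merely certifying its existence, and it shows every closed walk, not just every cycle, has even length. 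The paper's approach has the mild advantage of reusing the ``acts completely'' bookkeeping developed for Lemma~\ref{tgv9kd}, but nothing in it is needed to make your coloring work; your final paragraph motivating the choice of weights is not logically necessary, though it is a nice sanity check that the naive all-ones weighting fails when $m+1$ is even.
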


\begin{proof}
We show $\cG_n^{(a)}$ contains no odd cycles.
Let $u\in \{0,1\}^n$ be a vertex in some cycle of length $r$.
That is, there exists a list of simple maps $\Phi = (\vphi_{i_r}^{(a)},\dots,\vphi_{i_1}^{(a)})$ that acts completely on $u$ (i.e.~satisfies \eqref{acts_completely}) and such that $\Phi(u)=(\vphi_{i_r}^{(a)} \circ\cdots\circ \vphi_{i_1}^{(a)})(u)=u$, with each simple map corresponding to one edge in the cycle. 
We will argue that $r$ must be even.

Take $i_{(1)}=\min\{i_1,\dots, i_r\}$. 
By minimality, the instances of $\vphi_{i_{(1)}}^{(a)}$ are the only simple maps in $\Phi$ that act on the $i_{(1)}$-th letter. 
Additionally, since $(\Phi(u))_{i_{(1)}}=u_{i_{(1)}}$, the application of $\Phi$ must negate the $i_{(1)}$-th letter an even number of times. 
Since $\Phi$ acts completely on $u$, it follows that there is an even number of instances of $\vphi_{i_{(1)}}^{(a)}$ in $\Phi$.

Let $i_{(q)}$ be the $q$-th smallest distinct index, i.e.
\eq{
i_{(q)}=\min\big\{\{i_1,\dots,i_r\}\setminus \{i_{(1)},\dots,i_{(q-1)}\}\big\},
}
whenever the minimum is taken over a nonempty set.
Assume inductively that for all $p<q$, there is an even number of instances of $\vphi_{i_{(p)}}^{(a)}$ in $\Phi$. 
Since $(\Phi(u))_{i_{(q)}}=u_{i_{(q)}}$, the application of $\Phi$ must negate the $i_{(q)}$-th letter an even number of times. 
The simple maps in $\Phi$ that act on the $i_{(q)}$-th letter all have index at most $i_{(q)}$. 
Because there are an even number of simple maps in $\Phi$ with index strictly less than $i_{(q)}$, there must also be an even number of instances of $\vphi_{i_{(q)}}^{(a)}$ (here we are again using the assumption that $\Phi$ acts completely on $u$).

We have thus shown that every distinct simple map in $\Phi = (\vphi_{i_r},\dots,\vphi_{i_1})$ must appear an even number of times. 
In particular, $r$ is even.
\end{proof}


\section{Open problems} \label{sec_open}

\begin{enumerate}[leftmargin=8mm,label=\textup{\ref*{sec_open}.\arabic*.},ref=\textup{\ref*{sec_open}.\arabic*}]

\item \label{open1} Does Proposition~\ref{thm_iso} exhaust all possible isomorphisms?
That is, if $\cG_n^{(a)} \cong \cG_n^{(b)}$ for all $n$, is it the necessarily the case that $b$ can be obtained from $a$ by some composition of negation, seminegation, and reversal?
The answer is \textit{yes} for $m\in\{1,2,3\}$, but there is evidence that the answer is \textit{no} for larger $m$.
Namely, when $a = 10000$ and $b = 01000$, we have verified by computer that $\cG_n^{(a)} \cong \cG_n^{(b)}$ for all $n\le 17$; see Table~\ref{evilcase} for the sizes of the equivalence classes when $n=17$.
Yet these two keywords are not linked by any combination of sequence of negation, seminegation, and reversal.

\begin{table}[ht]
\caption{Number of equivalence classes of size $s$ induced on $\{0,1\}^{17}$ by two different keywords ($a$ and $b$). 
The evidence suggests these two keywords yield identical counts for every $n$ (and even have isomorphic graphs), but this particular case falls outside the scope of Proposition~\ref{thm_iso}.}
    \centering
    \begin{tabular}{c||cc}
        $n=17$ & $a=10000$ & $b=01000$ \\\hline\hline
        $s=1$ & 46,498 & 46,498 \\
        $s=2$ & 28,308 & 28,308 \\
        $s=3$ & 3,344 & 3,344 \\
        $s=4$ & 3,730 & 3,730 \\
        $s=5$ & 154 & 154 \\
        $s=6$ & 312 & 312 \\
        $s=7$ & 4 & 4 \\
        $s=8$ & 42 & 42 \\\hline
        total & 82,392 & 82,392
    \end{tabular}
    \label{evilcase}
\end{table}

\item \label{open2} It is trivial that if $\cG_n^{(a)} \cong \cG_n^{(b)}$, then $\sim_a$ and $\sim_{b}$ induce the same number of size $s$ equivalence classes for every $s$.
Is the converse true?
Note that the example from Table~\ref{only1_table} shows that equality at $s=1$ is not sufficient for equality at other values of $s$.

\item \label{open3} Klarner \cite{klarner66} proved the solution set to $R(N) = s$ is infinite for all $s$ (and also identified these sets for $s = 1,2,3$).
This means that for any keyword $a\in\{110,001,100,011\}$, there are equivalence classes of every size (as $n\to\infty$).
For other keywords, which sizes are possible?

\item \label{open4} How does the maximal size of an equivalence class grow with $n$, and how does the growth rate vary with the keyword?
For the Fibonacci keyword $a = 1^{m}0$, Koc{\'a}bov{\'a}, Mas{\'a}kov{\'a}, and Pelantov{\'a} \cite{kocabova_masakova_pelantova05,kocabova_masakova_pelantova07} studied the related quantity
\eq{
\mathrm{Max}^{(m)}(n) \coloneqq \max\{R^{(m)}(N):\, F_n^{(m)} \le N < F_{n+1}^{(m)}\},
}
where $R^{(m)}(N)$ is the generalization of $R(N) = R^{(2)}(N)$ to $m$-step Fibonacci numbers.
For instance, one of their results \cite[Thm.~4.7]{kocabova_masakova_pelantova05} says
\eq{
\mathrm{Max}^{(2)}(2n+1) &= F_{n+1}^{(2)} \quad \text{for $n\ge0$}, \\
\mathrm{Max}^{(2)}(2n+2) &= \rlap{$2F_n^{(2)}$}\phantom{F_{n+1}} \quad \text{for $n\ge1$}.
}
Do other keywords yield faster or slower growth?

\end{enumerate}

\section{Acknowledgments}
Part of this research began at the 2017 Stanford Undergraduate Research Institute in Mathematics (SURIM), which included invaluable discussions with Patrick Revilla.  
We also thank George Schaeffer, the director of the 2017 SURIM, for feedback and encouragement.
E.B. and B.M. were partially supported by National Science Foundation grant DMS-2412473.

\bibliographystyle{myacm}
\bibliography{erikbib.bib}

\end{document}